\newtheorem{thm}{Theorem}[section]
\newtheorem{lem}{Lemma}[section]
\newtheorem{cor}{Corollary}[section]
\newtheorem{rem}{Remark}[section]
\numberwithin{equation}{section}
\title{\textbf{Associated Derived Invariants
for Geometric Mappings of Non-Symmetric Affine Connection Spaces}}
\author{Nenad O. Vesi\'c\footnote{Serbian Ministry of Education, Science and Technological
Developments through Mathematical Institute of the Serbian Academy
of Sciences and Arts}}
\date{}
\def\maketag@@@#1{\hbox{\m@th\normalfont\normalsize#1}}
\begin{document}

  \maketitle

  \begin{abstract}
    The invariants of the
Thomas and the
    Weyl  type for a mapping between non-symmetric affine connection spaces
     are obtained with respect to the factored deformation
    tensor in this paper. Motivated by two invariants of the Weyl type obtained
    in
    \big(N. O. Vesi\'c, Basic Invariants of Geometric Mappings, \cite{jageninv}\big), we
    founded novel
    invariants of the Weyl type.
     Invariants
     for almost geodesic mappings of the third type
     are searched at the end of this paper.\\[5pt]

    \textbf{Key words:} curvature tensor, transformation rule,
    geometric mapping, invariant\\[2pt]

    \textbf{$2010$ Math. Subj. Classification:} 53A55, 53B05, 53C15
  \end{abstract}

  \section{Introduction}

  Many research papers and monographs are devoted to invariants for
  mappings between affine connection spaces. Some of them are
  J. Mike\v s \cite{mik10,mik3,mik1,mik6,mik5}, I. Hinterleitner \cite{mik6,mik5},
  N. S. Sinyukov \cite{sinjukov}, M. S. Stankovi\'c \cite{mica2, mica1, mica3}, Lj. S. Velimirovi\'c \cite{mica3},
  M. Lj. Zlatanovi\'c \cite{z4} and many others.
  The Thomas projective parameter, the Weyl conformal curvature tensor
  and the Weyl projective tensor \cite{mik1,mik6,mik5,sinjukov}
  have been studied and generalized by different mathematicians.

  Our main purpose in this paper is to obtain
  some general invariants for geometric mappings. In this
  research, we will continue the research from \cite{jageninv}.
  At the end of this paper, we will
  apply the results from this research to obtain
  invariants for the almost geodesic mappings of the third type.

  \subsection{Affine connection spaces}

  An $N$-dimensional manifold $\mathcal M_N$ equipped with the
  affine connection $\nabla$ (with torsion) is the affine connection
  space $\mathbb{GA}_N$ \big(see \cite{eisNRG, mincic4, mincic2, mincic1, z4, mica3, mica1}\big).

  The affine connection coefficients for the affine connection
  $\nabla$ are $L^i_{jk}$ and it holds $L^i_{jk}\not\equiv
  L^i_{kj}$. The symmetric and anti-symmetric part
  for the coefficients $L^i_{jk}$ are

  \begin{eqnarray}
    L^i_{\underline{jk}}=\dfrac12\big(L^i_{jk}+L^i_{kj}\big)&\mbox{and}&
    L^i_{\underset\vee{jk}}=\dfrac12\big(L^i_{jk}-L^i_{kj}\big).
    \label{eq:Lsimantisim}
  \end{eqnarray}

  The tensor $2L^i_{\underset\vee{jk}}$ is the torsion tensor for
  the space $\mathbb{GA}_N$.

  The symmetric affine connection space whose affine connection coefficients
  are $L^i_{\underline{jk}}$ is the associated space $\mathbb A_N$.

  \pagebreak

  One kind of covariant differentiation with respect to the affine connection of space
  $\mathbb A_N$ is defined. This kind of covariant derivative for a
  tensor $a^i_j$ of the type $(1,1)$ is

  \begin{equation}
    a^i_{j|k}=a^i_{j,k}+L^i_{\underline{\alpha
    k}}a^\alpha_j-L^\alpha_{\underline{jk}}a^i_\alpha,
    \label{eq:covariantderivativesim}
  \end{equation}

  \noindent for partial
  derivative $\partial/\partial x^k$ denoted by comma.

  One identity of Ricci type with respect to the
  covariant derivative (\ref{eq:covariantderivativesim})\linebreak $a^i_{j|m|n}-a^i_{j|n|m}=a^\alpha_jR_{\alpha mn}^i-
  a^i_\alpha R^\alpha_{jmn}$ is
  searched \big(see \cite{sinjukov,mik6,mik5}\big),
   for the curvature
  tensor

  \begin{equation}
    R^i_{jmn}=L^i_{\underline{jm},n}-L^i_{\underline{jn},m}+
    L^\alpha_{\underline{jm}}L^i_{\underline{\alpha n}}-
    L^\alpha_{\underline{jn}}L^i_{\underline{\alpha m}},
    \label{eq:R}
  \end{equation}

  \noindent of the space $\mathbb A_N$.

  \subsection{Recall to basic invariants}

  Let the deformation tensor for the mapping
  $\mathcal F:\mathbb{GA}_N\to\mathbb{G\overline A}_N$ be
  $P^i_{jk}=\overline\omega{}_{(p)jk}^i-\omega{}_{(p)jk}^i+
  \xi^i_{jk}$, $p,q=1,2,3$,
  $\omega{}_{(1)jk}^i=L^i_{\underline{jk}}$,
  $\omega{}_{(2)jk}^i=\omega{}^i_{jk}$,
  $\omega{}_{(3)jk}^i=-\dfrac12P^i_{\underline{jk}}$,
 the corresponding
  $\overline\omega{}_{(p)jk}^i$, and
  $\xi^i_{jk}=-\xi^i_{kj}$ as well.

  The basic associated invariants of the Thomas type of the first, the second
  and the third type for the mapping $\mathcal F$
  are \cite{jageninv}

  \begin{eqnarray}
    \widetilde{\mathcal
    T}{}^i_{(1)jk}=0,&
    \widetilde{\mathcal T}{}^i_{jk}=
    \widetilde{\mathcal
    T}{}^i_{(2)jk}=L^i_{\underline{jk}}-\omega_{jk}^i,&
    \widetilde{\mathcal
    T}{}^i_{(3)jk}=\dfrac12\big(\overline L{}^i_{\underline{jk}}+L^i_{\underline{jk}}\big).
    \label{eq:basicThomas}
  \end{eqnarray}

  With respect to the transformation rule of the curvature tensor
  $R^i_{jmn}$ of the associated space $\mathbb{A}_N$, it is obtained
  the basic associated invariant of the Weyl type \cite{jageninv} for the mapping $\mathcal F$

  \begin{equation}
    {\widetilde{\mathcal W}}{}^i_{jmn}=
    \widetilde{\mathcal W}{}^i_{(2)jmn}=
    R^i_{jmn}-\omega{}^i_{jm|n}+
    \omega{}^i_{jn|m}+
    \omega{}^\alpha_{jm}\omega{}^i_{\alpha
    n}-\omega{}^\alpha_{jn}\omega{}^i_{\alpha
    m}.
    \label{eq:Wbasic}
  \end{equation}

  In this paper, we are interested to obtain associated invariants for the mapping
  $\mathcal F$ with respect to the factored geometrical object
  $\omega^i_{jk}$, i.e. for
  $\omega^i_{jk}=s_1(\delta^i_j\rho_k+\delta^i_k\rho_j)+s_2(
  f^i_j\sigma_k+f^i_k\sigma_j)+s_3\phi^i_{jk}$,
  $s_1,s_2,s_3\in\{0,1\}$.

  \subsection{Motivation}

  In \cite{jageninv}, it is obtained associated invariants for
  mappings whose deformation tensors are
  $P^i_{{jk}}=\overline\omega{}^i_{jk}-\omega^i_{jk}+\xi^i_{jk}$.
  To test the efficiency of the obtained results, the author
  searched the invariants for a mapping $\mathcal
  F:\mathbb{GA}_N\to\mathbb{G\overline A}{}_N$ with respect to
  \big(see Corollary 1 in \cite{jageninv}\big)

  \begin{equation}
  \omega^i_{jk}=\delta^i_ju_k+\delta^i_ku_j+\sigma^i_{jk}
  \label{eq:omegabasicinvariants}
  \end{equation}

  The basic associated invariants for a geodesic mapping $\mathcal
  F:\mathbb{G
  A}_N\to\mathbb{G\overline A}{}_N$ are\linebreak \big(because $\omega^i_{jk}=\dfrac1{N+1}\delta^i_jL^\alpha_{\underline{k\alpha}}
  +\delta^i_k\dfrac1{N+1}L^\alpha_{\underline{j\alpha}}$\big)

  \begin{align}
    &\aligned
    \widetilde{\mathcal
    T}{}^i_{jk}=L^i_{\underline{jk}}-\dfrac1{N+1}\big(\delta^i_jL^\alpha_{\underline{k\alpha}}+
    \delta^i_kL^\alpha_{\underline{k\alpha}}\big),
    \endaligned\label{eq:Thomasprojp}\\
    &\aligned
    \widetilde{\mathcal W}{}^i_{jmn}&=R^i_{jmn}+\dfrac1{N+1}\delta^i_jL^\alpha_{[\underline{m\alpha}|n]}-
    \dfrac1{(N+1)^2}\delta^i_m\big((N+1)L^\alpha_{\underline{j\alpha}|n}+
    L^\alpha_{\underline{j\alpha}}L^\beta_{\underline{n\beta}}\big)\\&+
    \dfrac1{(N+1)^2}\delta^i_n\big((N+1)L^\alpha_{\underline{j\alpha}|m}+
    L^\alpha_{\underline{j\alpha}}L^\beta_{\underline{m\beta}}\big)
    \endaligned\label{eq:Weylprojtstart}
  \end{align}

  \noindent for
  $L^i_{\underline{jm}|n}=L^i_{\underline{jm},n}+L^i_{\underline{\alpha
  n}}L^\alpha_{\underline{jm}}-L^\alpha_{\underline{jn}}L^i_{\underline{\alpha
  m}}+L^\alpha_{\underline{mn}}L^i_{\underline{j\alpha}}$.

  The basic associated invariant (\ref{eq:Weylprojtstart}) is different of the
  Weyl projective tensor\linebreak
  $W^i_{jmn}=R^i_{jmn}+\dfrac1{N+1}\delta^i_jR_{[mn]}+
  \dfrac N{N^2-1}\delta^i_{[m}R_{jn]}+
  \dfrac1{N^2-1}\delta^i_{[m}R_{n]j}$.

  This paper is consisted of the introduction, three sections and
  conclusion.

  \begin{enumerate}
    \item At the start of the research, we will present the iterative
    rule for obtaining novel associated invariants of the Weyl type for a
    mapping $\mathcal F:\mathbb{GA}_N\to\mathbb{G\overline A}{}_N$
    with respect to the known ones.
    \item In the next section of this papper, we will pay attention to the basic
    invariants of the Thomas and the Weyl types (the invariants $\widetilde{\mathcal
    T}{}^i_{jk}$
    and $\widetilde{\mathcal W}{}^i_{jmn}$) for a mapping $\mathcal
    F:\mathbb{GA}_N\to\mathbb{G\overline A}{}_N$. Motivated with the
    results presented in the
    Corollary 2.2 in \cite{jageninv}, we will obtain associated invariants of the Weyl type
    with respect to the equality $\widetilde{\overline{\mathcal
    W}}{}^i_{jmn}-\widetilde{\mathcal{W}}{}^i_{jmn}=0$ and
    generalize them by applying the iterative process from the first
    section of this paper.
    \item In the fourth section of this paper, we will apply the
    obtained results to find associated invariants of the Thomas and
    the Weyl type for an almost geodesic mapping of the third type.
  \end{enumerate}

  \section{Associated derived
  invariants}\label{associatedderivedinvariantssec2}

  With
    respect to the equations (\ref{eq:covariantderivativesim},
    \ref{eq:R}), we get

  \begin{equation}
  R^\alpha_{\alpha ij}=L^\alpha_{\underline{\alpha i},j}-
      L^\alpha_{\underline{\alpha j},i}=
      L^\alpha_{\underline{\alpha i}|j}-
      L^\alpha_{\underline{\alpha
      j}|i}=-R_{[ij]}.\label{eq:Wbasicfactored**}
      \end{equation}

  Let us prove the following theorem.

  \begin{thm}\label{theoreminv}
    Any invariant $\mathcal W^i_{jmn}$ for a geometrical mapping $\mathcal
    F:\mathbb{GA}_N\to\mathbb{G\overline A}{}_N$ obtained with
    respect to the transformation rule of the curvature tensor
    $R^i_{jmn}$ of the associated space $\mathbb{A}_N$ is
    anti-symmetric by the indices $m$ and $n$.

    If the geometrical object

    \begin{equation}
      W^i_{jmn}=R^i_{jmn}+\delta^i_jX_{[mn]}+\delta^i_{[m}Y_{jn]}+Z^i_{jmn},
      \label{eq:propinvW}
    \end{equation}

    \noindent for tensors $X_{ij}$ and
    $Y_{ij}$ of the type $(0,2)$, and a tensor $Z^i_{jmn}$, $Z^i_{jmn}=-Z^i_{jnm}$, of the
    type $(1,3)$, is an invariant for a mapping $\mathcal F:\mathbb{GA}_N\to\mathbb{G\overline A}{}_N$,
    then the geometrical objects

    \begin{align}
      &\aligned
      W_{jmn}^{(1)i}&=R^i_{jmn}-\dfrac1N\delta^i_j\big(Y_{[mn]}+Z^\alpha_{\alpha
      mn}\big)+\delta^i_{[m}Y_{jn]}+Z^i_{jmn},
      \endaligned\label{eq:condinv1}\\\displaybreak[0]
      &\aligned
      W^{(2)i}_{jmn}&=R^i_{jmn}-\dfrac12\delta^i_j\big((N-1)Y_{[mn]}-Z^\alpha_{[mn]\alpha}\big)+
      \delta^i_{[m}Y_{jn]}+Z^i_{jmn},
      \endaligned\label{eq:condinv2}\\
      &\aligned
      W^{(4)i}_{jmn}&=R^i_{jmn}+\dfrac1{N-1}\delta^i_{[m}R_{\underline{jn}]}+
      \delta^i_jX_{[mn]}+Z^i_{jmn}\\&-\frac1{N-1}\big(\delta^i_{[m}X_{jn]}-\delta^i_{[m}X_{n]j}\big)+
      \dfrac1{N-1}\delta^i_{[m}Z^\alpha_{jn]\alpha},
      \endaligned\label{eq:condinv4}\\
      &R_{[ij]}=R_{ij}-R_{ji},\label{eq:R[ij]inv}
      \end{align}

    \noindent for $R_{\underline{ij}}=\dfrac12\big(R_{ij}+R_{ji}\big)$, are invariants for this mapping.
  \end{thm}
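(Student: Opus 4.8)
The plan is to exploit the hypothesis that $W^i_{jmn}$ in (\ref{eq:propinvW}) is an invariant, i.e.\ $\overline W{}^i_{jmn}=W^i_{jmn}$, and to produce each of the claimed invariants as an $\mathbb A_N$-covariant linear combination of $W^i_{jmn}$ together with suitable contractions of $W^i_{jmn}$ and the known basic Weyl invariant $\widetilde{\mathcal W}{}^i_{jmn}$ from (\ref{eq:Wbasic}). Since every displayed geometrical object in the theorem is visibly anti-symmetric in $m,n$ by construction, the opening sentence of the theorem (``any invariant obtained with respect to the transformation rule of $R^i_{jmn}$ is anti-symmetric in $m,n$'') is dispatched immediately by inspecting (\ref{eq:R}): swapping $m\leftrightarrow n$ in $R^i_{jmn}$ and in the correction terms built from $L^i_{\underline{jk}}$ only changes the overall sign, and since this holds in both spaces it holds for the difference, hence for any invariant of this form.

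Next I would extract the transformation rule of $R^i_{jmn}$ under $\mathcal F$. From (\ref{eq:Wbasic}) one reads off $\overline R{}^i_{jmn}-R^i_{jmn}=\overline\omega{}^i_{jm|n}-\overline\omega{}^i_{jn|m}-\omega{}^i_{jm|n}+\omega{}^i_{jn|m}-\overline\omega{}^\alpha_{jm}\overline\omega{}^i_{\alpha n}+\overline\omega{}^\alpha_{jn}\overline\omega{}^i_{\alpha m}+\omega{}^\alpha_{jm}\omega{}^i_{\alpha n}-\omega{}^\alpha_{jn}\omega{}^i_{\alpha m}$, which I will abbreviate as $\overline R{}^i_{jmn}-R^i_{jmn}=\Xi^i_{jmn}$ with $\Xi^i_{jmn}=-\Xi^i_{jnm}$. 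The invariance of (\ref{eq:propinvW}) then forces $\delta^i_j(\overline X_{[mn]}-X_{[mn]})+\delta^i_{[m}(\overline Y_{jn]}-Y_{jn]})+(\overline Z{}^i_{jmn}-Z^i_{jmn})=-\Xi^i_{jmn}$. I will then contract this master identity in two ways: setting $i=j$ and summing, and setting $i=m$ and summing. Using (\ref{eq:Wbasicfactored**}) to identify $R^\alpha_{\alpha mn}=-R_{[mn]}$, and writing $R_{jn}=R^\alpha_{jn\alpha}$ for the Ricci tensor, these two contractions express the ``unknown'' increments $\overline X_{[mn]}-X_{[mn]}$, $\overline Y_{jn}-Y_{jn}$ (and their (anti)symmetrisations), and the traces $\overline Z{}^\alpha_{\alpha mn}-Z^\alpha_{\alpha mn}$, $\overline Z{}^\alpha_{jn\alpha}-Z^\alpha_{jn\alpha}$ in terms of $\Xi$-contractions alone. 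Substituting these back into the right-hand sides of (\ref{eq:condinv1}), (\ref{eq:condinv2}), (\ref{eq:condinv4}) and checking that all $\Xi$-dependence cancels is the computation that proves each is invariant; (\ref{eq:R[ij]inv}) follows at once since $R_{[ij]}=R^\alpha_{\alpha ij}$ and $\overline R{}^\alpha_{\alpha ij}-R^\alpha_{\alpha ij}=\Xi^\alpha_{\alpha ij}$, and one checks directly from the formula for $\Xi$ that $\Xi^\alpha_{\alpha ij}=0$ because $\omega^\alpha_{j\alpha}$ contractions are symmetric where needed — more precisely $\omega^\alpha_{jm}\omega^i_{\alpha n}$ contracted on $i=\alpha$(outer) type terms telescope; I will verify this cancellation explicitly.

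The bookkeeping for $W^{(4)i}_{jmn}$ is the most delicate: it mixes the genuinely new pieces $\delta^i_{[m}R_{\underline{jn}]}$ and $\delta^i_{[m}X_{jn]}$, so one must combine the $i=m$ contraction of the master identity (which controls $R_{jn}$, hence $R_{\underline{jn}}$) with the $i=j$ contraction (which controls $X_{[mn]}$ through the first term of (\ref{eq:propinvW})) and carefully track the two distinct $\delta$-index placements $\delta^i_{[m}X_{jn]}$ versus $\delta^i_{[m}X_{n]j}$. I would organise this by first assembling, from $\overline W{}^i_{jmn}=W^i_{jmn}$, clean formulas for each of $\overline X_{[mn]}-X_{[mn]}$, $\overline{(Y_{[mn]}+Z^\alpha_{\alpha mn})}-(Y_{[mn]}+Z^\alpha_{\alpha mn})$, $\overline{((N-1)Y_{[mn]}-Z^\alpha_{[mn]\alpha})}-(\cdots)$, and $\overline{(\delta^i_{[m}R_{\underline{jn}]}\text{-package})}-(\cdots)$, and then merely substituting — this keeps each verification to a few lines. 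The main obstacle I anticipate is not conceptual but combinatorial: correctly handling the anti-symmetrisation brackets $[\,\cdot\,]$ over $m,n$ together with the symmetrisation $\underline{jn}$ inside $R_{\underline{jn}}$, and making sure the normalisation constants $1/N$, $1/2$, $1/(N-1)$ come out exactly as written after contraction (the factor $N$ arising from $\delta^\alpha_\alpha=N$ and the factor $N-1$ from a trace like $\delta^\alpha_{[m}(\cdots)_{n]\alpha}$). I expect no step to require anything beyond the identities (\ref{eq:covariantderivativesim})–(\ref{eq:R}) and (\ref{eq:Wbasic})–(\ref{eq:Wbasicfactored**}) already available in the excerpt.
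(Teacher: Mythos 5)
Your overall strategy for the second half --- write $\overline W{}^i_{jmn}-W^i_{jmn}=0$, contract on $i=j$ and on an upper--lower pair from $\{m,n\}$, solve for the increments of $X_{[mn]}$, $Y_{jn}$ and the traces of $Z$, and substitute back --- is exactly the paper's route to (\ref{eq:condinv1}), (\ref{eq:condinv2}), (\ref{eq:condinv4}). However, your argument for the invariance of $R_{[ij]}$ is circular, and this is not a side issue: the back-substitutions naturally produce invariants that still carry a $\delta^i_jR_{[mn]}$ term (the paper's $\mathcal W^{(1)}$, $\mathcal W^{(2)}$, $\mathcal W^{(4)}$ of (\ref{eq:proofconditionalcalW1})--(\ref{eq:proofconditionalcalW4})), and only after $R_{[mn]}$ is separately shown to be invariant do these reduce to the $R_{[mn]}$-free expressions asserted in the theorem. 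You propose to get $\overline R{}_{[ij]}=R_{[ij]}$ by showing $\Xi^\alpha_{\alpha ij}=0$ from the explicit $\omega$-formula for the transformation of $R^i_{jmn}$. But by (\ref{eq:Wbasicfactored**}), $\Xi^\alpha_{\alpha ij}=\overline R{}^\alpha_{\alpha ij}-R^\alpha_{\alpha ij}=-\big(\overline R{}_{[ij]}-R_{[ij]}\big)$, so asserting $\Xi^\alpha_{\alpha ij}=0$ is asserting the conclusion. Your telescoping only kills the quadratic terms $\omega^\alpha_{jm}\omega^i_{\alpha n}-\omega^\alpha_{jn}\omega^i_{\alpha m}$ under the trace $i=j$; the derivative terms survive and leave the curl $\omega^\alpha_{\alpha m|n}-\omega^\alpha_{\alpha n|m}$, which does not vanish in general (in the geodesic case $\omega^\alpha_{\alpha m}=L^\alpha_{\underline{m\alpha}}$ and this curl is exactly $-R_{[mn]}$, making the circularity explicit). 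Moreover, the theorem hypothesizes only that some $W^i_{jmn}$ of the form (\ref{eq:propinvW}) is invariant; it does not supply the $\omega$-formula for $\Xi$ that you lean on.

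The paper closes this gap by a different device: the two contractions give two distinct expressions for $\overline X{}_{[mn]}-X_{[mn]}$ (one not antisymmetrised in the free indices, one antisymmetrised), hence two distinct invariants $\mathcal W^{(2)}$ and $\mathcal W^{(3)}$; antisymmetrising $\mathcal W^{(2)}$ via the first part of the theorem and subtracting yields $\mathcal W^{(2)i}_{jmn}-\mathcal W^{(3)i}_{jmn}=\frac12\delta^i_jR_{[mn]}$, i.e.\ (\ref{eq:R[ij]inv'}), which is the claimed invariance of $R_{[ij]}$. You need some version of this (or of the argument in Remark 2.1) before you may drop the $\delta^i_jR_{[mn]}$ terms. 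Relatedly, your treatment of the first claim assumes the correction terms are already antisymmetric in $m$, $n$; the paper's point is stronger: for an arbitrary $\Pi^i_{jmn}$ in $\overline R{}^i_{jmn}=R^i_{jmn}+\overline\Pi{}^i_{jmn}-\Pi^i_{jmn}$, symmetrising in $m,n$ forces $\overline\Pi{}^i_{j(mn)}=\Pi^i_{j(mn)}$, so the symmetric part splits off and the remaining invariant is antisymmetric. Since this first part is then actively used to produce (\ref{eq:R[ij]inv'}), it cannot be dismissed as an inspection of the displayed formulas.
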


  \begin{proof}
    Let us consider the transformation rule

    \begin{equation}
      \overline
      R{}^i_{jmn}=R^i_{jmn}+\overline\Pi{}^i_{jmn}-\Pi_{jmn}^i,
      \label{eq:RtoR=Pi-Pi}
    \end{equation}

    \noindent for the corresponding geometrical object
    $\Pi{}^i_{jmn}$ and its image $\overline\Pi{}^i_{jmn}$.

    \pagebreak

    Let also be

    \begin{equation}
      \begin{array}{cc}
        \Pi^i_{j(mn)}=\Pi^i_{jmn}+\Pi^i_{jnm},&
        \Pi^i_{j[mn]}=\Pi^i_{jmn}-\Pi^i_{jnm},\\
        \overline\Pi{}^i_{j(mn)}=\overline\Pi{}^i_{jmn}+\overline\Pi{}^i_{jnm},&
        \overline\Pi{}^i_{j[mn]}=\overline\Pi{}^i_{jmn}-\overline\Pi{}^i_{jnm}.
      \end{array}\label{eq:Pis}
    \end{equation}

    With respect to the equation (\ref{eq:R}), one reads

    \begin{equation}
    \begin{array}{ccc}
    R^i_{jmn}=-R^i_{jnm}&\mbox{which means}&R^i_{jmn}=\dfrac12\big(R^i_{jmn}-R^i_{jnm}\big).
    \end{array}\label{eq:Rijmn=-Rijnm}
    \end{equation}

    Based on the equations (\ref{eq:RtoR=Pi-Pi},
    \ref{eq:Pis}, \ref{eq:Rijmn=-Rijnm}), we get

    \begin{equation}
      \overline R{}^i_{jmn}-R^i_{jmn}=
      \dfrac12\big(\overline\Pi{}^i_{j[mn]}-\Pi^i_{j[mn]}\big)+
      \dfrac12\big(\overline\Pi{}^i_{j(mn)}-\Pi^i_{j(mn)}\big).
      \label{eq:R-Rantisimetric1}
    \end{equation}

    After symmetrizing the last equation by the indices $m$ and $n$,
    one obtains

    \begin{equation}
      0=
      \overline\Pi{}^i_{j(mn)}-\Pi^i_{j(mn)}.
      \label{eq:R-Rantisimetric2}
    \end{equation}

    If substitutes the equation (\ref{eq:R-Rantisimetric2}) into the
    equation (\ref{eq:R-Rantisimetric1}), one will conclude that the
    geometrical object

    \begin{equation*}
      \dfrac12\big(R^i_{jmn}-R^i_{jnm}-\Pi^i_{j[mn]}\big)
    \end{equation*}

    \noindent is an invariant for this mapping. This invariant is
    anti-symmetric by the indices $m$ and $n$, which
    completes the proof of the first part for this theorem.

    Let us consider the equality $0=\overline
    W{}^i_{jmn}-W^i_{jmn}$, i.e.

    \begin{equation}
      \aligned
      0&=\overline R{}^i_{jmn}\!-\!R^i_{jmn}\!+\!\delta^i_j\big(\overline
      X{}_{[mn]}\!-\!X_{[mn]}\big)\!+\!\big(\delta^i_{[m}\overline
      Y{}_{jn]}\!-\!\delta^i_{[m}Y_{jn]}\big)\!+\!\big(\overline
      Z{}^i_{jmn}\!-\!Z^i_{jmn}\big).
      \endaligned\label{eq:propositionproof1}
    \end{equation}

    After contracting it by the indices $i$ and $j$ and applying
    the equation (\ref{eq:Wbasicfactored**}), one
    gets

    \begin{equation}
      0=-\overline R_{[mn]}+R_{[mn]}+N\big(\overline
      X{}_{[mn]}-X_{[mn]}\big)+\big(\overline
      Y_{[mn]}-Y_{[mn]}\big)+\big(\overline Z{}^\alpha_{\alpha
      mn}-Z^\alpha_{\alpha mn}\big).\label{eq:propositionproof1i=j}
    \end{equation}

    If expresses the summand $\overline X{}_{[mn]}-X_{[mn]}$
    with respect to the equation (\ref{eq:propositionproof1i=j}) and
    substitutes it into the equality (\ref{eq:propositionproof1}),
    one will confirm the invariance $\overline{\mathcal
    W}{}^{(1)i}_{jmn}=\mathcal W^{(1)i}_{jmn}$, for

    \begin{equation}
      \mathcal W^{(1)i}_{jmn}=R^i_{jmn}+\dfrac1N\delta^i_j\big(R_{[mn]}-Y_{[mn]}-Z^\alpha_{\alpha
      mn}\big)+\delta^i_{[m}Y_{jn]}+Z^i_{jmn},
      \label{eq:proofconditionalcalW1}
    \end{equation}

    \noindent and the corresponding $\mathcal{\overline
    W}{}^{(1)i}_{jmn}$.

    By contracting the equation (\ref{eq:propositionproof1}) on
    the indices $i$ and $n$, we obtain

    \begin{align}
      &\aligned
      0=\overline R{}_{jm}-R_{jm}-\big(\overline
      X_{[jm]}-X_{[jm]}\big)-(N-1)\big(\overline
      Y{}_{jm}-Y_{jm}\big)+
      \big(\overline
      Z{}^\alpha_{jm\alpha}-Z^\alpha_{jm\alpha}\big),
      \endaligned\label{eq:propositionproof1i=n}\\
      &\aligned
      0=\overline R{}_{[jm]}\!-\!R_{[jm]}\!-\!2\big(\overline
      X_{[jm]}\!-\!X_{[jm]}\big)\!-\!(N-1)\big(\overline
      Y{}_{[jm]}\!-\!Y_{[jm]}\big)\!+\!
      \big(\overline
      Z{}^\alpha_{[jm]\alpha}\!-\!Z^\alpha_{[jm]\alpha}\big).
      \endaligned\label{eq:propositionproof1i=nn}
    \end{align}

    Based on the equations (\ref{eq:propositionproof1i=n},
    \ref{eq:propositionproof1i=nn}), we also get

    \begin{align}
      &\aligned
      \overline X{}_{[ij]}-X_{[ij]}=\overline
      R{}_{ij}-(N-1)\overline Y_{ij}+\overline
      Z{}^\alpha_{ij\alpha}-R_{ij}+(N-1)Y_{ij}-Z^\alpha_{ij\alpha},
      \endaligned\label{eq:propositionproof1X-X1}\\\displaybreak[0]
      &\aligned
      \overline
      X{}_{[ij]}-X_{[ij]}&=\dfrac12\big(\overline
      R{}_{[ij]}-(N-1)\overline Y{}_{[ij]}+\overline
      Z{}^\alpha_{[ij]\alpha}\big)-\dfrac12\big(R_{[ij]}-(N-1)Y_{[ij]}+Z^\alpha_{[ij]\alpha}\big),
      \endaligned\label{eq:propositionproof1X-X2}\\\displaybreak[0]
      &\aligned
      \overline
      Y{}_{ij}-Y_{ij}&=\dfrac1{N-1}\big(\overline
      R{}_{ij}-\overline X{}_{[ij]}+\overline
      Z{}^\alpha_{ij\alpha}\big)-\dfrac1{N-1}\big(R_{ij}-X_{[ij]}+Z^\alpha_{ij\alpha}\big).
      \endaligned\label{eq:propositionproof1Y-Y1}
    \end{align}

    \pagebreak

    If substitute the expressions
    (\ref{eq:propositionproof1X-X1}, \ref{eq:propositionproof1X-X2},
    \ref{eq:propositionproof1Y-Y1})
    into the equation (\ref{eq:propositionproof1}) and use the equality\linebreak $\delta^i_mX_{[jn]}-\delta^i_nX_{[jm]}=
    \delta^i_{[m}X_{jn]}-\delta^i_{[m}X_{n]j}$, we will obtain

    \begin{equation*}
      \begin{array}{ccc}
        \mathcal{\overline W}{}^{(2)i}_{jmn}=\mathcal W^{(2)i}_{jmn},&
        \mathcal{\overline W}{}^{(3)i}_{jmn}=\mathcal W^{(3)i}_{jmn},&
        \mathcal{\overline W}{}^{(4)i}_{jmn}=\mathcal W^{(4)i}_{jmn},
      \end{array}
    \end{equation*}

    \noindent for

    \begin{align}
      &\aligned\mathcal W^{(2)i}_{jmn}&=R^i_{jmn}+\delta^i_j\big(R_{[mn]}-(N-1)Y_{mn}+Z^\alpha_{mn\alpha}\big)+
      \delta^i_{[m}Y_{jn]}+Z^i_{jmn},\endaligned\label{eq:proofconditionalcalW2}
      \\
      &\aligned\mathcal W^{(3)i}_{jmn}&=R^i_{jmn}+\dfrac12\delta^i_j\big(R_{[mn]}-(N-1)Y_{[mn]}+Z^\alpha_{[mn]\alpha}\big)+
      \delta^i_{[m}Y_{jn]}+Z^i_{jmn},
      \endaligned\label{eq:proofconditionalcalW3}\\
      &\aligned
      \mathcal W^{(4)i}_{jmn}&=R^i_{jmn}+\dfrac1{N-1}\delta^i_{[m}R_{\underline{jn}]}+
      \delta^i_jX_{[mn]}+Z^i_{jmn}
      \\&-\frac1{N-1}\big(\delta^i_{[m}X_{jn]}-\delta^i_{[m}X_{n]j}\big)+
      \dfrac1{N-1}\delta^i_{[m}Z^\alpha_{jn]\alpha},
      \endaligned\label{eq:proofconditionalcalW4}
    \end{align}

    \noindent and the corresponding $\overline{\mathcal
    W}{}^{(2)i}_{jmn}$, $\overline{\mathcal
    W}{}^{(3)i}_{jmn}$, $\overline{\mathcal
    W}{}^{(4)i}_{jmn}$.

    With respect to the equation (\ref{eq:proofconditionalcalW2})
    and the first part of this theorem \big($\mathcal
    W^{(2)i}_{jmn}=-\mathcal W^{(2)i}_{jnm}$\big), the
    invariant $\mathcal W^{(2)i}_{jmn}$ satisfies the equality $\mathcal
    W^{(2)i}_{jmn}=\dfrac12\big(\mathcal W^{(2)i}_{jmn}-\mathcal
    W^{(2)i}_{jnm}\big)$, i.e.

    \begin{equation}
      \mathcal W^{(2)i}_{jmn}=R^i_{jmn}+\dfrac12\delta^i_j
      \big(2R_{[mn]}-(N-1)Y_{[mn]}+Z^\alpha_{[mn]\alpha}\big)+
      \delta^i_{[m}Y_{jn]}+Z^i_{jmn}.
      \tag{\ref{eq:proofconditionalcalW2}'}\label{eq:proofconditionalcalW2'}
    \end{equation}

    From the equations (\ref{eq:proofconditionalcalW3}) and
    (\ref{eq:proofconditionalcalW2'}), we find the invariant

    \begin{equation}
      \mathcal W{}^{(2)i}_{jmn}-\mathcal W{}^{(3)i}_{jmn}=
      \dfrac12\delta^i_jR_{[mn]},
      \tag{\ref{eq:R[ij]inv}'}\label{eq:R[ij]inv'}
    \end{equation}

    \noindent for the mapping $\mathcal F$, which proves the stated invariance of the
    geometrical object $R_{[ij]}$.

    For this reason, the invariants for the mapping $\mathcal F$ obtained in the
    equations (\ref{eq:proofconditionalcalW1},
    \ref{eq:proofconditionalcalW2}, \ref{eq:proofconditionalcalW3},
    \ref{eq:proofconditionalcalW4}) reduce to the geometrical
    objects $W^{(1)i}_{jmn}$, $W^{(2)i}_{jmn}$,
    $W^{(4)i}_{jmn}$ given by the equations (\ref{eq:condinv1},
    \ref{eq:condinv2}, \ref{eq:condinv4}).
    \end{proof}

  The invariants $W^{(1)i}_{jmn}$, $W^{(2)i}_{jmn}$ and $W^{(4)i}_{jmn}$ for the mapping
  $\mathcal F:\mathbb{GA}_N\to\mathbb{G\overline A}{}_N$ given by the equations
   (\ref{eq:condinv1}, \ref{eq:condinv2}, \ref{eq:condinv4}) are
  \emph{the first}, \emph{the second} and \emph{the fourth associated derived invariant} \big(with
  respect to the invariant $W^i_{jmn}$ given by the equation
  (\ref{eq:propinvW})\big).

  \begin{rem}
    If $X_{[ij]}=0$, the equations
    \emph{(\ref{eq:propositionproof1X-X1},
    \ref{eq:propositionproof1X-X2})}
    reduce to

    \begin{align}
      &\aligned
     0=\overline
      R{}_{ij}-(N-1)\overline Y_{ij}+\overline
      Z{}^\alpha_{ij\alpha}-R_{ij}+(N-1)Y_{ij}-Z^\alpha_{ij\alpha},
      \endaligned\label{eq:propositionproof1X-X1'}\\\displaybreak[0]
      &\aligned
      0&=\dfrac12\big(\overline
      R{}_{[ij]}-(N-1)\overline Y{}_{[ij]}+\overline
      Z{}^\alpha_{[ij]\alpha}\big)-\dfrac12\big(R_{[ij]}-(N-1)Y_{[ij]}+Z^\alpha_{[ij]\alpha}\big).
      \endaligned\label{eq:propositionproof1X-X2'}
    \end{align}

    By anti-symmetrizing the equation
    \emph{(\ref{eq:propositionproof1X-X1'})} on the indices $i$ and
    $j$, one will obtain

    \begin{equation}
      0=\overline
      R{}_{[ij]}-(N-1)\overline Y_{[ij]}+\overline
      Z{}^\alpha_{[ij]\alpha}-R_{[ij]}+(N-1)Y_{[ij]}-Z^\alpha_{[ij]\alpha}.
      \tag{\ref{eq:propositionproof1X-X1'}'}\label{eq:propositionproof1X-X1''}
    \end{equation}

    From the difference
    $2\cdot(\ref{eq:propositionproof1X-X2'})-(\ref{eq:propositionproof1X-X1''})$,
    one derives the invariants $\delta^i_jR_{[mn]}$ and $\dfrac1N\delta^\alpha_\alpha
    R_{[mn]}\equiv R_{[mn]}$ for the mapping $\mathcal F$.
  \end{rem}
  \section{Associated invariants for geometric
  mappings}\label{associatedderivedinvariantssec3}

  Let $\mathcal F:\mathbb{GA}_N\to\mathbb{G\overline A}_N$ be a mapping which
  transforms the affine connection coefficients $L^i_{jk}=L^i_{\underline{jk}}+
  L^i_{\underset\vee{jk}}$ of the space $\mathbb{GA}_N$ by the rule

  \begin{equation}
    \aligned
    \overline L^i_{{jk}}&=L^i_{{jk}}+s_1\Big[\delta^i_j\big(\overline
    u_k-u_k\big)+\delta^i_k\big(\overline u_j-u_j\big)\Big]\\&+
    s_2\Big[\big(
    \overline f{}^i_j\overline\sigma{}_k+\overline
    f{}^i_k\overline\sigma_j\big)-\big(f^i_j\sigma_k+f^i_k\sigma_j\big)\Big]+
    s_3\Big[\overline\phi{}^i_{jk}-\phi^i_{jk}\Big]+\xi^i_{jk},
    \endaligned\label{eq:LtoLtransformationrulegeneral}
  \end{equation}

  \noindent for the corresponding coefficients
  $s_1,s_2,s_3\in\{0,1\}$, the tensor $\xi^i_{jk}$ of the type $(1,2)$ anti-symmetric by the indices $j$ and $k$, the covariant vectors
  $u_j$, $\overline u{}_j$, $\sigma_j$, $\overline\sigma{}_j$,
  the contravariant vectors $\phi^i$,
  $\overline\phi{}^i$, the affinors $f^i_j$, $\overline f{}^i_j$, and the
  geometrical objects $\phi^i_{jk}$ and $\overline\phi{}^i_{jk}$ of the type $(1,2)$ such that $\phi^i_{jk}=\phi^i_{kj}$ and
  $\overline\phi{}^i_{jk}=\overline\phi{}^i_{kj}$.

  After symmetrizing the last equation by the
  indices $j$ and $k$, one gets

  \begin{align}
    &\aligned
    \overline L^i_{\underline{jk}}&=L^i_{\underline{jk}}+s_1\Big[\delta^i_j\big(\overline
    u_k-u_k\big)+\delta^i_k\big(\overline u_j-u_j\big)\Big]\\&+
    s_2\Big[\big(
    \overline f{}^i_j\overline\sigma{}_k+\overline
    f{}^i_k\overline\sigma_j\big)-\big(f^i_j\sigma_k+f^i_k\sigma_j\big)\Big]+
    s_3\Big[\overline\phi{}^i_{jk}-\phi^i_{jk}\Big].
    \endaligned
    \label{eq:LtoLtransformationrulegeneralsim}
    \end{align}

    \subsection{Basic associated invariant of Thomas type}

  If contract the equality
  (\ref{eq:LtoLtransformationrulegeneralsim}) by the indices
  $i$ and $j$ and use the equalities $\phi^i_{jk}=\phi^i_{kj}$ and
  $\overline\phi{}^i_{jk}=\overline\phi{}^i_{kj}$, we will obtain

  \begin{equation*}
    s_1\psi_k=\frac 1{N+1}\Bigg\{\overline L{}^\alpha_{\underline{k\alpha}}-
    L^\alpha_{\underline{k\alpha}}-s_2\Big(\big(\overline f{}^\alpha_k\overline\sigma_\alpha+
    \overline f\overline\sigma{}_k\big)-
    \big(f^\alpha_k\sigma_\alpha+f\sigma_k\big)\Big)-
    s_3\big(\overline\phi{}^\alpha_{k\alpha}-
    \phi^\alpha_{k\alpha}\big)\Bigg\},
  \end{equation*}

  \noindent for $\psi_i=\overline u{}_i-u_i$,
  the scalars $f=f^\alpha_\alpha$ and $\overline f=\overline f{}^\alpha_\alpha$.

  After substituting the previous expression of $s_1\psi_j$ into   the equation
  (\ref{eq:LtoLtransformationrulegeneralsim}), one
  obtains

  \begin{align}
    &\aligned
    \omega^i_{jk}&=s_2\big(f^i_j\sigma_k+f^i_k\sigma_j\big)+s_3\phi^i_{jk}
    +\frac1{N+1}\delta^i_j\Big[L^\alpha_{\underline{k\alpha}}-
    s_2\big(f^\alpha_k\sigma_\alpha+f\sigma_k\big)-s_3\phi^\alpha_{k\alpha}\Big]
    \\&+\frac1{N+1}\delta^i_k\Big[L^\alpha_{\underline{j\alpha}}-
    s_2\big(f^\alpha_j\sigma_\alpha+f\sigma_j\big)-s_3\phi^\alpha_{j\alpha}\Big]
    .
    \endaligned\label{eq:omegageneral}
  \end{align}

  The basic associated invariant of the Thomas type for the mapping $\mathcal F$
  is \cite{jageninv}

  \begin{align}
    &\aligned
    \widetilde{\mathcal T}{}^i_{jk}&=L^i_{\underline{jk}}-
    s_2\big(f^i_j\sigma_k+f^i_k\sigma_j\big)-s_3\phi^i_{jk}-
    \dfrac1{N+1}\Big\{\delta^i_j\Big[
    L^\alpha_{\underline{k\alpha}}-s_2\big(f^\alpha_k\sigma_\alpha+f\sigma_k\big)
    -s_3\phi^\alpha_{k\alpha}
    \Big]\\&+
    \delta^i_k\Big[
    L^\alpha_{\underline{j\alpha}}-
    s_2\big(f^\alpha_j\sigma_\alpha+f\sigma_j\big)-
    s_3\phi^\alpha_{j\alpha}
    \Big]
    \Big\}.
    \endaligned\label{eq:ThomasBasicsim2factoredsim}
  \end{align}

  It holds the next lemma.
  \begin{lem}\label{thomasbasicfactored}
    Let $f:\mathbb{GA}_N\to\mathbb{G\overline A}_N$ be a mapping
    defined on a non-symmetric affine connection space
    $\mathbb{GA}_N$. The geometrical object
    \emph{(\ref{eq:ThomasBasicsim2factoredsim})} is the basic associated invariant of the Thomas
    type for this mapping.\qed
  \end{lem}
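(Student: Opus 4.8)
The plan is to verify directly that the geometrical object $\widetilde{\mathcal T}{}^i_{jk}$ defined in \eqref{eq:ThomasBasicsim2factoredsim} is unchanged under the mapping $\mathcal F$; that is, writing $\overline{\widetilde{\mathcal T}}{}^i_{jk}$ for the object built from the barred quantities $\overline L{}^i_{\underline{jk}}$, $\overline f{}^i_j$, $\overline\sigma_k$, $\overline\phi{}^i_{jk}$, I would show $\overline{\widetilde{\mathcal T}}{}^i_{jk}=\widetilde{\mathcal T}{}^i_{jk}$. The natural starting point is the transformation rule \eqref{eq:LtoLtransformationrulegeneralsim} for the symmetrized connection coefficients, which expresses $\overline L{}^i_{\underline{jk}}-L^i_{\underline{jk}}$ in terms of $\psi_k=\overline u_k-u_k$ and the $s_2$, $s_3$ differences. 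The key observation, already recorded in the excerpt, is that the contraction on $i$ and $j$ together with the symmetry $\phi^i_{jk}=\phi^i_{kj}$, $\overline\phi{}^i_{jk}=\overline\phi{}^i_{kj}$ eliminates the antisymmetric vector $\psi$ from $s_1\psi_k$ only after solving, and substituting that solution back into \eqref{eq:LtoLtransformationrulegeneralsim} yields precisely the factored geometrical object $\omega^i_{jk}$ of \eqref{eq:omegageneral}, whose barred counterpart $\overline\omega{}^i_{jk}$ is built identically from the barred data.

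With this in hand, the decisive algebraic identity is that \eqref{eq:LtoLtransformationrulegeneralsim} can be rewritten as $\overline L{}^i_{\underline{jk}}-\overline\omega{}^i_{jk}=L^i_{\underline{jk}}-\omega^i_{jk}$, since the $s_1$-part on the right-hand side of \eqref{eq:LtoLtransformationrulegeneralsim} is exactly what the $\delta^i_j$- and $\delta^i_k$-terms in the definition of $\omega^i_{jk}$ absorb, while the $s_2$- and $s_3$-parts cancel between $\overline L{}^i_{\underline{jk}}$ and $\overline\omega{}^i_{jk}$ on the left. But the right-hand side $L^i_{\underline{jk}}-\omega^i_{jk}$ is, by direct comparison of \eqref{eq:omegageneral} with \eqref{eq:ThomasBasicsim2factoredsim}, literally the geometrical object $\widetilde{\mathcal T}{}^i_{jk}$; hence $\widetilde{\mathcal T}{}^i_{jk}=\overline L{}^i_{\underline{jk}}-\overline\omega{}^i_{jk}=\overline{\widetilde{\mathcal T}}{}^i_{jk}$, which is the claimed invariance. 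In the language of the excerpt this is just the specialization of the basic associated Thomas invariant $\widetilde{\mathcal T}{}^i_{jk}=L^i_{\underline{jk}}-\omega^i_{jk}$ of \eqref{eq:basicThomas} to the factored choice $\omega^i_{jk}=s_1(\delta^i_j\rho_k+\delta^i_k\rho_j)+s_2(f^i_j\sigma_k+f^i_k\sigma_j)+s_3\phi^i_{jk}$, with $\rho$ resolved via the trace, so one may alternatively cite \eqref{eq:basicThomas} and merely substitute the explicit $\omega^i_{jk}$ from \eqref{eq:omegageneral}.

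The step I expect to be the only real bookkeeping obstacle is confirming that the $\delta^i_j$- and $\delta^i_k$-coefficients match exactly: one must check that the combination $\tfrac1{N+1}\bigl[L^\alpha_{\underline{k\alpha}}-s_2(f^\alpha_k\sigma_\alpha+f\sigma_k)-s_3\phi^\alpha_{k\alpha}\bigr]$ appearing in $\omega^i_{jk}$ is the correct solution of the contracted equation for $s_1\psi_k$, and that no factor of two or index-placement error creeps in when symmetrizing \eqref{eq:LtoLtransformationrulegeneral} to \eqref{eq:LtoLtransformationrulegeneralsim}. Once that coefficient is pinned down, the invariance of $\widetilde{\mathcal T}{}^i_{jk}$ is immediate, and the lemma follows; the statement is essentially a restatement, in the factored setting, of the result of \cite{jageninv} recalled in \eqref{eq:basicThomas}, so no genuinely new difficulty arises beyond this substitution.
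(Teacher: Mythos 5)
Your proposal is correct and follows essentially the same route as the paper: contract the symmetrized transformation rule \eqref{eq:LtoLtransformationrulegeneralsim} on $i$ and $j$ to solve for $s_1\psi_k$, substitute back to identify $\omega^i_{jk}$ as in \eqref{eq:omegageneral}, and then observe that $\widetilde{\mathcal T}{}^i_{jk}=L^i_{\underline{jk}}-\omega^i_{jk}$ is the basic Thomas-type invariant recalled from \eqref{eq:basicThomas}, which is exactly why the paper states the lemma with no further proof. The only cosmetic slip is calling $\psi_k$ ``antisymmetric'' (it is the antisymmetric tensor $\xi^i_{jk}$ that is removed by the symmetrization step, not $\psi$), but this does not affect the argument.
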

  \begin{cor}
    Let $\mathcal F:\mathbb{GA}_N\to\mathbb{G\overline A}{}_N$ be a
    mapping of a non-symmetric affine connection space
    $\mathbb{GA}_N$.

    In the case of $s_1=0$, the geometrical object

    \begin{equation}
      \tilde\theta_i=L^\alpha_{\underline{i\alpha}}-s_2\big(f^\alpha_i\sigma_\alpha+f\sigma_i\big)-s_3\phi^\alpha_{i\alpha}
      \label{eq:thetainvantisim}
    \end{equation}

    \noindent is an invariant for the mapping $\mathcal F$.

    In this
    case, the invariant \emph{(\ref{eq:ThomasBasicsim2factoredsim})}
    reduces to

    \begin{equation}
      \overset\ast{\widetilde{\mathcal
      T}}{}^i_{jk}=L^i_{\underline{jk}}-s_2\big(f^i_j\sigma_k+f^i_k\sigma_j\big)-s_3\phi^i_{jk},
      \label{eq:basicthomass1=0}
    \end{equation}

  \noindent for the corresponding coefficients $s_2$, $s_3$.\qed
  \end{cor}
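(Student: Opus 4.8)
The plan is to extract both assertions directly from the symmetric transformation rule (\ref{eq:LtoLtransformationrulegeneralsim}) once $s_1=0$ is imposed. Setting $s_1=0$ removes the $\delta^i_j\psi_k+\delta^i_k\psi_j$ contribution, so (\ref{eq:LtoLtransformationrulegeneralsim}) becomes
\begin{equation*}
\overline L^i_{\underline{jk}}=L^i_{\underline{jk}}+s_2\Big[\big(\overline f{}^i_j\overline\sigma_k+\overline f{}^i_k\overline\sigma_j\big)-\big(f^i_j\sigma_k+f^i_k\sigma_j\big)\Big]+s_3\Big[\overline\phi{}^i_{jk}-\phi^i_{jk}\Big].
\end{equation*}
Moving every barred object to the left-hand side and every unbarred object to the right-hand side turns this into $\overline{\overset\ast{\widetilde{\mathcal T}}}{}^i_{jk}=\overset\ast{\widetilde{\mathcal T}}{}^i_{jk}$, with $\overset\ast{\widetilde{\mathcal T}}{}^i_{jk}$ exactly as in (\ref{eq:basicthomass1=0}); this is the invariance of (\ref{eq:basicthomass1=0}).

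For the invariance of $\tilde\theta_i$, the quickest route is to look at the displayed identity for $s_1\psi_k$ written just before (\ref{eq:omegageneral}): with $s_1=0$ its left-hand side vanishes, so that identity reads
\begin{equation*}
0=\frac1{N+1}\Big\{\overline L{}^\alpha_{\underline{k\alpha}}-s_2\big(\overline f{}^\alpha_k\overline\sigma_\alpha+\overline f\,\overline\sigma_k\big)-s_3\overline\phi{}^\alpha_{k\alpha}-L^\alpha_{\underline{k\alpha}}+s_2\big(f^\alpha_k\sigma_\alpha+f\sigma_k\big)+s_3\phi^\alpha_{k\alpha}\Big\},
\end{equation*}
i.e. $\overline{\tilde\theta}_k=\tilde\theta_k$ after renaming $k\mapsto i$ and recalling $f=f^\alpha_\alpha$, $\overline f=\overline f{}^\alpha_\alpha$. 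Equivalently one may simply contract the displayed $s_1=0$ rule above on $i$ and $k$ and separate barred from unbarred terms; the symmetries $\phi^i_{jk}=\phi^i_{kj}$ and $L^i_{\underline{jk}}=L^i_{\underline{kj}}$ are used only to align the index placement with (\ref{eq:thetainvantisim}).

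Finally, to see that the basic Thomas invariant (\ref{eq:ThomasBasicsim2factoredsim}) ``reduces'' to (\ref{eq:basicthomass1=0}) in this case, I would rewrite (\ref{eq:ThomasBasicsim2factoredsim}) using the abbreviation (\ref{eq:thetainvantisim}) as the purely algebraic identity
\begin{equation*}
\widetilde{\mathcal T}{}^i_{jk}=\overset\ast{\widetilde{\mathcal T}}{}^i_{jk}-\frac1{N+1}\big(\delta^i_j\tilde\theta_k+\delta^i_k\tilde\theta_j\big),
\end{equation*}
valid whenever $s_1=0$. Since $\widetilde{\mathcal T}{}^i_{jk}$ is an invariant by Lemma \ref{thomasbasicfactored} and $\tilde\theta_i$ is an invariant by the previous step, the correction term $\frac1{N+1}(\delta^i_j\tilde\theta_k+\delta^i_k\tilde\theta_j)$ is invariant, hence so is $\overset\ast{\widetilde{\mathcal T}}{}^i_{jk}$, and conversely (\ref{eq:basicthomass1=0}) carries the same transformation-invariant content in a shorter form.

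There is no genuine obstacle here: the whole argument is a rearrangement of (\ref{eq:LtoLtransformationrulegeneralsim}) together with one contraction. The only place where care is needed is the bookkeeping of that contraction — correctly producing the traces $f=f^\alpha_\alpha$ and $\overline f=\overline f{}^\alpha_\alpha$ out of $\overline f{}^\alpha_\alpha\overline\sigma_k$ and $f^\alpha_\alpha\sigma_k$, and invoking the symmetry of $\phi^i_{jk}$, $\overline\phi{}^i_{jk}$ and of $L^i_{\underline{jk}}$ to match indices with the statement.
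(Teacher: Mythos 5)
Your argument is correct and is essentially the intended one: the paper states this corollary without proof precisely because it follows, as you show, from separating barred and unbarred terms in the $s_1=0$ specialization of (\ref{eq:LtoLtransformationrulegeneralsim}), from the displayed identity for $s_1\psi_k$ (whose left side vanishes), and from the algebraic decomposition $\widetilde{\mathcal T}{}^i_{jk}=\overset\ast{\widetilde{\mathcal T}}{}^i_{jk}-\frac1{N+1}\big(\delta^i_j\tilde\theta_k+\delta^i_k\tilde\theta_j\big)$. No gaps.
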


  \subsection{Basic associated invariant of Weyl type}

  For the geometrical object $\omega{}^i_{jk}$ given by the
  equation (\ref{eq:omegageneral}), one gets

  \begin{footnotesize}
  \begin{align}
    &\aligned
    \omega^i_{jm|n}&=s_2\big(f^i_{j|n}\sigma_m+f^i_{m|n}\sigma_j+f^i_j\sigma_{m|n}+
    f^i_m\sigma_{j|n}\big)+s_3\phi^i_{jm|n}\\&+
    \frac1{N+1}\delta^i_j\Big[
    L^\alpha_{\underline{m\alpha}|n}-s_2\big(f^\alpha_{m|n}\sigma_\alpha+f_{,n}\sigma_m
    +f^\alpha_m\sigma_{\alpha|n}+f\sigma_{m|n}
    \big)-s_3\phi^\alpha_{m\alpha|n}
    \Big]\\&+
    \frac1{N+1}\delta^i_m\Big[
    L^\alpha_{\underline{j\alpha}|n}-s_2\big(
    f^\alpha_{j|n}\sigma_\alpha+f_{,n}\sigma_j+
    f^\alpha_j\sigma_{\alpha|n}+f\sigma_{j|n}
    \big)-
    s_3
    \phi^\alpha_{j\alpha|n}
    \Big],
    \endaligned\label{eq:omegaijm|nfactored}\\\displaybreak[0]
    &\aligned
    \omega^\alpha_{jm}\omega^i_{\alpha n}&=
    \dfrac2{(N+1)^2}\delta^i_n\Big[L^\alpha_{\underline{j\alpha}}
    -s_2\big(f^\alpha_j\sigma_\alpha+f\sigma_j\big)-s_3\phi^\alpha_{j\alpha}
    \Big]\Big[L^\beta_{\underline{m\beta}}-s_2\big(f^\beta_m\sigma_\beta+f\sigma_m\big)-
    s_3\phi^\beta_{m\beta}
    \Big]\\&+
    \dfrac1{(N+1)^2}\delta^i_m\Big[L^\alpha_{\underline{j\alpha}}
    -s_2\big(f^\alpha_j\sigma_\alpha+f\sigma_j\big)-s_3\phi^\alpha_{j\alpha}
    \Big]\Big[L^\beta_{\underline{n\beta}}-s_2\big(f^\beta_n\sigma_\beta+f\sigma_n\big)-
    s_3\phi^\beta_{n\beta}
    \Big]\\&+
    \dfrac1{(N+1)^2}\delta^i_j
    \Big[L^\alpha_{\underline{m\alpha}}-
    s_2\big(f^\alpha_m\sigma_\alpha+f\sigma_m\big)-
    s_3\phi^\alpha_{m\alpha}\Big]
    \Big[
    L^\beta_{\underline{n\beta}}-
    s_2\big(f^\beta_n\sigma_\beta+f\sigma_n\big)-
    s_3\phi^\beta_{n\beta}
    \Big]\\&+
    \frac1{N+1}\delta^i_n
    \Big[s_2\big(
    f^\alpha_j\sigma_m+f^\alpha_m\sigma_j
    \big)+s_3\sigma_{jm}\phi^\alpha
    \Big]\Big[
    L^\beta_{\underline{\alpha\beta}}-
    s_2\big(f^\beta_\alpha\sigma_\beta+f\sigma_\alpha\big)-
    s_3\sigma_{\alpha\beta}\phi^\beta
    \Big]\\&+\dfrac1{N+1}
    \Big[L^\alpha_{\underline{n\alpha}}-s_2
    \big(f^\alpha_n\sigma_\alpha+f\sigma_n\big)-
    s_3\phi^\alpha_{n\alpha}\Big]\Big[s_2
    \big(f^i_j\sigma_m+f^i_m\sigma_j\big)+
    s_3\phi^i_{jm}\Big]\\&+
    \dfrac1{N+1}\Big[L^\alpha_{\underline{m\alpha}}-
    s_2\big(f^\alpha_m\sigma_\alpha+f\sigma_m\big)-
    s_3\phi^\alpha_{m\alpha}\Big]\Big[
    s_2\big(f^i_j\sigma_n+f^i_n\sigma_j\big)+s_3\phi^i_{jn}\Big]\\&+
    \dfrac1{N+1}\Big[L^\alpha_{\underline{j\alpha}}-
    s_2\big(f^\alpha_j\sigma_\alpha+f\sigma_j\big)-s_3\phi^\alpha_{j\alpha}\Big]
    \Big[s_2\big(f^i_m\sigma_n+f^i_n\sigma_m\big)+s_3\phi^i_{mn}\Big]\\&+
    \Big[s_2\big(f^\alpha_j\sigma_m+f^\alpha_m\sigma_j\big)+s_3\phi^\alpha_{jm}\Big]
    \Big[s_2\big(f^i_\alpha\sigma_n+f^i_n\sigma_\alpha\big)+s_3\phi^i_{\alpha n}\Big].
    \endaligned\label{eq:omegaomegafactored}
  \end{align}
  \end{footnotesize}

  If substitute the expressions (\ref{eq:omegaijm|nfactored},
  \ref{eq:omegaomegafactored}) into the equation (\ref{eq:Wbasic}),
  and use the invariance $\overline R{}_{[ij]}=R_{[ij]}$ as well,
  we will obtain that the geometrical object

    \begin{equation}
      \widetilde{\mathcal
      W}{}^i_{jmn}=R^i_{jmn}+\mathcal A^i_{jmn}-\dfrac1{N+1}\Big[\delta^i_j\rho_{[mn]}
      +\delta^i_{[m}L^\alpha_{\underline{j\alpha}|n]}-
      \delta^i_{[m}\rho_{jn]}\Big]
      -\dfrac1{(N+1)^2}\delta^i_{[m}\widetilde{\mathcal
      S}{}_{jn]},
      \label{eq:Wbasicfactored}
    \end{equation}

    \noindent
    for $f_i=f_{,i}\equiv f_{|i}$ and

    \begin{align}
      &\rho_{ij}=s_2\big(f^\alpha_{i|j}\sigma_\alpha+f_{j}\sigma_i+
      f^\alpha_i\sigma_{\alpha|j}+f\sigma_{i|j}\big)+
      s_3\phi^\alpha_{i\alpha|j},
      \label{eq:Wbasicfactoredrho}\\\displaybreak[0]
      &\aligned
      \widetilde{\mathcal
      S}{}_{ij}&=(N+1)\Big[
      L^\beta_{\underline{\alpha\beta}}-
      s_2\big(f^\beta_\alpha\sigma_\beta+f\sigma_\alpha\big)-
      s_3\phi^\beta_{\alpha\beta}
      \Big]\Big[s_2\big(f^\alpha_i\sigma_j+f^\alpha_j\sigma_i\big)+
      s_3\phi^\alpha_{ij}
      \Big]\\&+
      \Big[
      L^\alpha_{\underline{i\alpha}}-s_2\big(f^\alpha_i\sigma_\alpha+f\sigma_i\big)-
      s_3\phi^\alpha_{i\alpha}
      \Big]\Big[
      L^\beta_{\underline{j\beta}}-
      s_2\big(
      f^\beta_j\sigma_\beta+f\sigma_j
      \big)-s_3\phi^\beta_{j\beta}
      \Big],
      \endaligned\label{eq:WbasicfactoredS}\\\displaybreak[0]
      &\aligned
      \mathcal A^i_{jmn}&=-s_2\big(f^i_{[m|n]}\sigma_j-
      f^i_{j|[m}\sigma_{n]}+
      f^i_j\sigma_{[m|n]}+f^i_{[m}\sigma_{j|n]}
      \big)-s_3\phi^i_{j[m|n]}\\&+
      \Big[s_2\big(f^\alpha_j\sigma_m+f^\alpha_m\sigma_j\big)+s_3\phi^\alpha_{jm}\Big]\,
      \Big[s_2\big(f^i_\alpha\sigma_n+f^i_n\sigma_\alpha\big)+s_3\phi^i_{\alpha n}\Big]\\&-
      \Big[s_2\big(f^\alpha_j\sigma_n+f^\alpha_n\sigma_j\big)+s_3\phi^\alpha_{jn}\Big]\,
      \Big[s_2\big(f^i_\alpha\sigma_m+f^i_m\sigma_\alpha\big)+s_3\phi^i_{\alpha m}\Big],
      \endaligned\label{eq:WbasicfactoredA}
    \end{align}

    \noindent is the basic associated invariant for the mapping $\mathcal
    F$.

    \pagebreak

    It is satisfied the next equalities

    \begin{align}
      &\aligned
      0&=\overline R^i_{jmn}-\dfrac1{N+1}\Big[\delta^i_j\overline\rho_{[mn]}
      +\delta^i_{[m}\overline
      L^\alpha_{\underline{j\alpha}\|n]}-
      \delta^i_{[m}\overline\rho_{jn]}\Big]
      -\dfrac1{(N+1)^2}\delta^i_{[m}\widetilde{\overline{\mathcal
      S}}{}_{jn]}+\overline{\mathcal A}{}^i_{jmn}\\&-
      R^i_{jmn}+\dfrac1{N+1}\Big[\delta^i_j\rho_{[mn]}+
      \delta^i_{[m}L^\alpha_{\underline{j\alpha}|n]}-
      \delta^i_{[m}\rho_{jn]}\Big]
      +\dfrac1{(N+1)^2}\delta^i_{[m}\widetilde{\mathcal
      S}{}_{jn]}-\mathcal A^i_{jmn},
      \endaligned
      \label{eq:Wbasicfactored*}
      \\
      &\widetilde{\mathcal S}{}_{ij}=\widetilde{\mathcal
      S}{}_{ji},\mbox{ so }\widetilde{\mathcal S}{}_{[ij]}=0,\label{eq:Wbasicfactored****}\\&
      \delta^i_{m}L^\alpha_{\underline{j\alpha}|n}-
      \delta^i_{m}L^\alpha_{\underline{n\alpha}|j}\overset{(\ref{eq:Wbasicfactored**})}=-\delta^i_mR_{[jn]}
      \Rightarrow\delta^i_{[m}L^\alpha_{\underline{j\alpha}|n]}-
      \delta^i_{[m}L^\alpha_{\underline{n]\alpha}|j}=-\delta^i_{[m}R_{jn]}+\delta^i_{[m}R_{n]j},\label{eq:Laja|n-Lana|j}\\&
      \rho_{[ij]}=s_2
      \big(
      f^\alpha_{[i|j]}\sigma_\alpha-f_{[i}\sigma_{j]}+f^\alpha_{[i}\sigma_{\alpha|j]}+
      f\sigma_{[i|j]}
      \big)+s_3\phi^\alpha_{[i\alpha|j]}
      ,\label{eq:Wbasicfactored*****}
      \\&
      \mathcal A{}^\alpha_{\alpha ij}=
      -s_2\big(
      f^\alpha_{[i|j]}\sigma_\alpha-f_{[i}\sigma_{j]}+f\sigma_{[i|j]}+f^\alpha_{[i}\sigma_{\alpha|j]}
      \big)-s_3\phi^\alpha_{\alpha[i|j]}=-\rho_{[ij]},
      \label{eq:Wbasicfactored******}\\
      &\aligned
      \mathcal A^\alpha_{[ij]\alpha}&=s_2\big(-f_{[i}\sigma_{j]}+
      f^\alpha_{[i|j]}\sigma_\alpha+f^\alpha_{[i}\sigma_{\alpha|j]}+
      f\sigma_{[i|j]}\big)+s_3\phi^\alpha_{[i\alpha|j]}=\rho_{[ij]}.
      \endaligned\label{eq:Wbasicfactored******x}
    \end{align}

    The equation (\ref{eq:Wbasicfactored*}) holds based on the
    invariance $\widetilde{\overline{\mathcal
    W}}{}^i_{jmn}-\widetilde{\mathcal W}{}^i_{jmn}\overset{(\ref{eq:Wbasicfactored})}=0$.  The equations
    (\ref{eq:Wbasicfactored****}--\ref{eq:Wbasicfactored******x}) are
    in effect with respect to the expressions
    (\ref{eq:Wbasicfactoredrho}, \ref{eq:WbasicfactoredS},
    \ref{eq:WbasicfactoredA}) of the corresponding geometrical
    structures.

    If contract the equation (\ref{eq:Wbasicfactored*}) by the
    indices $i$ and $j$, use the symmetry (\ref{eq:Wbasicfactored****}) and the expression (\ref{eq:Wbasicfactored******}),
    and with respect to the invariance $-\overline L{}^\alpha_{[\underline{i\alpha}\|j]}\overset{(\ref{eq:Wbasicfactored**})}=
    \overline R{}_{[ij]}\overset{(\ref{eq:R[ij]inv})}=R_{[ij]}
    \overset{(\ref{eq:Wbasicfactored**})}=-L^\alpha_{[\underline{i\alpha}|j]}$ as well, we will obtain

    \begin{equation*}
      \aligned
      0&=-\dfrac N{N+1}\overline\rho{}_{[mn]}+
      \dfrac1{N+1}\overline\rho{}_{[mn]}\cancelto{-\overline\rho{}_{[mn]}}{+\overline{\mathcal A}{}^\alpha_{\alpha
      mn}}
      +\dfrac N{N+1}\rho_{[mn]}-\dfrac1{N+1}\rho_{[mn]}\cancelto{\rho_{[mn]}}{-
      \mathcal A^\alpha_{\alpha mn}},
      \endaligned
    \end{equation*}

    \noindent i.e.

    \begin{equation}
    \overline\rho{}_{[mn]}=\rho_{[mn]}.
    \label{eq:i=jWbasicfactor*}
    \end{equation}

    Based on the equations (\ref{eq:Wbasicfactored*****},
    \ref{eq:Wbasicfactored******}, \ref{eq:i=jWbasicfactor*}), one gets

    \begin{equation}
    \aligned
    &\overline{\mathcal A}{}^\alpha_{\alpha ij}=\mathcal
    A{}^\alpha_{\alpha ij},\quad\overline{\mathcal
    A}{}^\alpha_{[ij]\alpha}=\mathcal
    A{}^\alpha_{[ij]\alpha}.\endaligned\label{eq:Wrhounutrasnjainv}
    \end{equation}

    Hence, the basic invariant $\widetilde{\mathcal
    W}{}^i_{jmn}$ for the mapping $\mathcal F$ given by the equation (\ref{eq:Wbasicfactored}) reduces to

    \begin{equation}
      \aligned
      \widetilde{\mathcal
      W}{}^i_{jmn}=R^i_{jmn}+\mathcal
      A^i_{jmn}-\dfrac1{N+1}\big(\delta^i_{[m}L^\alpha_{\underline{j\alpha}|n]}-
      \delta^i_{[m}\rho_{jn]}\big)
      -\dfrac1{(N+1)^2}\delta^i_{[m}\widetilde{\mathcal
      S}{}_{jn]}.
      \endaligned\tag{\ref{eq:Wbasicfactored}'}\label{eq:Wbasicfactoredfinal}
    \end{equation}

    The next lemma holds.

    \begin{lem}
    Let $\mathcal F:\mathbb{GA}_N\to\mathbb{G\overline A}{}_N$ be a mapping
    whose deformation tensor is $P^i_{jk}$,
    $P^i_{\underline{jk}}=\overline\omega{}^i_{jk}-\omega^i_{jk}$,
    for the geometrical object $\omega^i_{jk}$ given by the equation
    \emph{(\ref{eq:omegageneral})}.

    The geometrical object $\widetilde{\mathcal W}{}^i_{jmn}$ given
    by the equation \emph{(\ref{eq:Wbasicfactoredfinal})} is the
    associated basic invariant of the Weyl type for the mapping $\mathcal F$.
      \qed
    \end{lem}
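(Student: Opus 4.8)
The plan is to verify that the geometrical object $\widetilde{\mathcal W}{}^i_{jmn}$ displayed in \eqref{eq:Wbasicfactoredfinal} satisfies $\widetilde{\overline{\mathcal W}}{}^i_{jmn}=\widetilde{\mathcal W}{}^i_{jmn}$, which is exactly the meaning of being an associated invariant of the Weyl type. Since \eqref{eq:Wbasicfactoredfinal} was obtained from \eqref{eq:Wbasicfactored} by removing only terms that were already shown to be separately invariant, the proof is essentially a bookkeeping argument assembling facts that are all available in the excerpt.

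First I would recall the transformation rule \eqref{eq:LtoLtransformationrulegeneralsim} for $L^i_{\underline{jk}}$ and, from it, the expression \eqref{eq:omegageneral} for the factored object $\omega^i_{jk}$; substituting into the basic Weyl invariant \eqref{eq:Wbasic} and using the already-established invariance $\overline R_{[ij]}=R_{[ij]}$ (equation \eqref{eq:Wbasicfactored**} together with Theorem~\ref{theoreminv}), one arrives at \eqref{eq:Wbasicfactored}, i.e. $\widetilde{\mathcal W}{}^i_{jmn}=R^i_{jmn}+\mathcal A^i_{jmn}-\frac1{N+1}[\delta^i_j\rho_{[mn]}+\delta^i_{[m}L^\alpha_{\underline{j\alpha}|n]}-\delta^i_{[m}\rho_{jn]}]-\frac1{(N+1)^2}\delta^i_{[m}\widetilde{\mathcal S}{}_{jn]}$, and this is an invariant by construction of the factored $\omega^i_{jk}$. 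The substantive content of the lemma is then that the two terms $\mathcal A^i_{jmn}$ and $-\frac1{N+1}\delta^i_j\rho_{[mn]}$, which appear in \eqref{eq:Wbasicfactored} but are \emph{dropped} in \eqref{eq:Wbasicfactoredfinal}, are individually invariant, so removing them preserves invariance.

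Next I would invoke the string of identities \eqref{eq:Wbasicfactored****}--\eqref{eq:Wbasicfactored******x}: the symmetry $\widetilde{\mathcal S}{}_{ij}=\widetilde{\mathcal S}{}_{ji}$, the trace relations $\mathcal A^\alpha_{\alpha ij}=-\rho_{[ij]}$ and $\mathcal A^\alpha_{[ij]\alpha}=\rho_{[ij]}$, and $\rho_{[ij]}=\rho_{[ij]}$ from \eqref{eq:Wbasicfactored*****}. Contracting the invariance relation \eqref{eq:Wbasicfactored*} on $i$ and $j$, using \eqref{eq:Wbasicfactored****} and \eqref{eq:Wbasicfactored******} and the invariance $-\overline L{}^\alpha_{[\underline{i\alpha}\|j]}=\overline R_{[ij]}=R_{[ij]}=-L^\alpha_{[\underline{i\alpha}|j]}$, the terms collapse to $\overline\rho_{[mn]}=\rho_{[mn]}$, which is \eqref{eq:i=jWbasicfactor*}. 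Combining \eqref{eq:i=jWbasicfactor*} with the structural identities \eqref{eq:Wbasicfactored*****}--\eqref{eq:Wbasicfactored******} yields \eqref{eq:Wrhounutrasnjainv}, namely $\overline{\mathcal A}{}^\alpha_{\alpha ij}=\mathcal A^\alpha_{\alpha ij}$ and $\overline{\mathcal A}{}^\alpha_{[ij]\alpha}=\mathcal A^\alpha_{[ij]\alpha}$. Together with $\overline\rho_{[mn]}=\rho_{[mn]}$, these give precisely the invariance of the two dropped pieces, and subtracting them from \eqref{eq:Wbasicfactored} leaves the invariant \eqref{eq:Wbasicfactoredfinal}, completing the argument.

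The step I expect to be the main obstacle is verifying the algebraic identities \eqref{eq:Wbasicfactored******} and \eqref{eq:Wbasicfactored******x} — that the contractions $\mathcal A^\alpha_{\alpha ij}$ and $\mathcal A^\alpha_{[ij]\alpha}$ of the lengthy quadratic-in-$\omega$ expression \eqref{eq:WbasicfactoredA} both equal $\mp\rho_{[ij]}$ with $\rho$ given by \eqref{eq:Wbasicfactoredrho}. This requires carefully tracing how the $s_2$ (the $f^i_j\sigma_k$) and $s_3$ (the $\phi^i_{jk}$) contributions contract, using $\phi^i_{jk}=\phi^i_{kj}$, and checking that the cross terms and the $f=f^\alpha_\alpha$ traces combine correctly; it is routine but long, and it is where an index-handling slip would propagate. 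Everything else is a direct chaining of the numbered equations \eqref{eq:Wbasic}, \eqref{eq:omegageneral}, \eqref{eq:Wbasicfactored}, \eqref{eq:Wbasicfactored****}--\eqref{eq:Wrhounutrasnjainv}, which the excerpt has already laid out, so the remaining work is to present that chaining cleanly.
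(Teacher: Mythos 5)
Your route is the paper's own: substitute the factored $\omega^i_{jk}$ of (\ref{eq:omegageneral}) into (\ref{eq:Wbasic}) to obtain (\ref{eq:Wbasicfactored}), write the invariance relation (\ref{eq:Wbasicfactored*}), contract on $i$ and $j$ using $\widetilde{\mathcal S}{}_{[ij]}=0$, $\mathcal A^\alpha_{\alpha ij}=-\rho_{[ij]}$ and $\overline R{}_{[ij]}=R_{[ij]}$ to deduce $\overline\rho{}_{[mn]}=\rho_{[mn]}$, and then discard the piece of (\ref{eq:Wbasicfactored}) that this makes redundant. That is exactly the chain of equations (\ref{eq:Wbasicfactored*})--(\ref{eq:i=jWbasicfactor*}) in the text, and your identification of the trace identities (\ref{eq:Wbasicfactored******}), (\ref{eq:Wbasicfactored******x}) as the only genuinely computational step is fair.

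There is, however, one concrete error. You assert that \emph{two} terms, $\mathcal A^i_{jmn}$ and $-\tfrac1{N+1}\delta^i_j\rho_{[mn]}$, are dropped in passing from (\ref{eq:Wbasicfactored}) to (\ref{eq:Wbasicfactoredfinal}) and that both are individually invariant. Comparing the two displays, only $-\tfrac1{N+1}\delta^i_j\rho_{[mn]}$ is removed; $\mathcal A^i_{jmn}$ appears in both and must remain, since it is built from $f^i_j$, $\sigma_k$, $\phi^i_{jk}$, which transform into $\overline f{}^i_j$, $\overline\sigma{}_k$, $\overline\phi{}^i_{jk}$ with no constraint forcing $\overline{\mathcal A}{}^i_{jmn}=\mathcal A^i_{jmn}$. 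What (\ref{eq:Wrhounutrasnjainv}) establishes is only the invariance of the contractions $\mathcal A^\alpha_{\alpha ij}=-\rho_{[ij]}$ and $\mathcal A^\alpha_{[ij]\alpha}=\rho_{[ij]}$, as a consequence of $\overline\rho{}_{[ij]}=\rho_{[ij]}$; the full tensor is not invariant, and subtracting it would yield neither an invariant nor the formula (\ref{eq:Wbasicfactoredfinal}). Once you correct this --- drop only $\delta^i_j\rho_{[mn]}$ and keep $\mathcal A^i_{jmn}$ --- your argument coincides with the paper's proof.
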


  \subsection{Associated derived invariants with respect to basic
  invariants}

  If contract the equality $\widetilde{\overline{\mathcal
  T}}{}^i_{jk}-\widetilde{\mathcal T}{}^i_{jk}=0$ by any pair of
  indices $(i,j)$ or $(i,k)$ as well, we will express the transformation rule for terms in
  brackets from the equation (\ref{eq:ThomasBasicsim2factoredsim}).
  This transformation rule will not change the form of the invariant
  $\widetilde{\mathcal T}{}^i_{jk}$.

  With respect to the invariant $\widetilde{\mathcal W}{}^i_{jmn}$
  for the mapping $\mathcal F$ given by the equation (\ref{eq:Wbasicfactoredfinal}), we get

  \begin{equation}
    \begin{array}{ccc}
      X_{ij}=0,&Y_{ij}=-\dfrac1{(N+1)^2}\Big[(N+1)\big(L^\alpha_{\underline{i\alpha}|j}-\rho_{ij}\big)+\widetilde{\mathcal
      S}{}_{ij}\Big],&Z^i_{jmn}=\mathcal A^i_{jmn}.
    \end{array}\label{eq:XYZ(Wbasic)}
  \end{equation}

  After substituting the terms (\ref{eq:XYZ(Wbasic)}) into the
  equations (\ref{eq:condinv1}, \ref{eq:condinv2}, \ref{eq:condinv4}), and applying the
  invariance $\overline{\mathcal A}{}^\alpha_{[ij]\alpha}=\mathcal A^\alpha_{[ij]\alpha}$, one
  confirms the validity of the next theorem.

  \begin{thm}
    Let $\mathcal F:\mathbb{GA}_N\to\mathbb{G\overline A}{}_N$ be a
    mapping whose deformation tensor is $P^i_{jk}$,
    $P^i_{\underline{jk}}=\overline\omega{}^i_{jk}-\omega^i_{jk}$,
    for the geometrical object $\omega^i_{jk}$ given by the equation
    \emph{(\ref{eq:omegageneral})}.

    The geometrical objects $\widetilde{\mathcal
    W}{}^{(1)i}_{jmn}\equiv\mathcal{\widetilde W}{}^i_{jmn}$,
    $\widetilde{\mathcal
    W}{}^{(2)i}_{jmn}\equiv\widetilde{\mathcal W}{}^{(1)i}_{jmn}$,

    \begin{align}
    &\aligned
      \widetilde{\mathcal
      W}{}^{(4)i}_{jmn}=R^i_{jmn}+\dfrac1{N-1}\delta^i_{[m}R_{\underline{jn}]}+\mathcal
      A{}^i_{jmn}+\dfrac1{N-1}\delta^i_{[m}\mathcal
      A{}^\alpha_{\underline{jn}]\alpha},
      \endaligned\label{eq:FW4}
    \end{align}

    \noindent for the geometrical objects $\rho_{ij}$,
    $\widetilde{\mathcal S}{}_{ij}$, $\mathcal A^i_{jmn}$ given by the equations \emph{(\ref{eq:Wbasicfactoredrho},
    \ref{eq:WbasicfactoredS}, \ref{eq:WbasicfactoredA})}, are invariants for the mapping
    $\mathcal F$.
  \end{thm}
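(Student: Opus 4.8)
The plan is to invoke the machinery already assembled in Theorem~\ref{theoreminv}, feeding it the specific data coming from the basic associated invariant of the Weyl type $\widetilde{\mathcal W}{}^i_{jmn}$ of the equation~(\ref{eq:Wbasicfactoredfinal}). First I would rewrite $\widetilde{\mathcal W}{}^i_{jmn}$ in the template form~(\ref{eq:propinvW}), reading off the constituent geometrical objects: with the identifications given in~(\ref{eq:XYZ(Wbasic)}), namely $X_{ij}=0$, $Y_{ij}=-\frac1{(N+1)^2}\big[(N+1)(L^\alpha_{\underline{i\alpha}|j}-\rho_{ij})+\widetilde{\mathcal S}{}_{ij}\big]$ and $Z^i_{jmn}=\mathcal A^i_{jmn}$, one checks directly that $\delta^i_jX_{[mn]}=0$, that $\delta^i_{[m}Y_{jn]}$ reproduces the $-\frac1{N+1}(\delta^i_{[m}L^\alpha_{\underline{j\alpha}|n]}-\delta^i_{[m}\rho_{jn]})-\frac1{(N+1)^2}\delta^i_{[m}\widetilde{\mathcal S}{}_{jn]}$ part (using the symmetry $\widetilde{\mathcal S}{}_{ij}=\widetilde{\mathcal S}{}_{ji}$ of~(\ref{eq:Wbasicfactored****})), and that $Z^i_{jmn}=\mathcal A^i_{jmn}$ satisfies $Z^i_{jmn}=-Z^i_{jnm}$ because $\mathcal A^i_{jmn}$ is manifestly anti-symmetric in $m,n$ by its defining expression~(\ref{eq:WbasicfactoredA}). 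So $\widetilde{\mathcal W}{}^i_{jmn}$ is an instance of $W^i_{jmn}$ in~(\ref{eq:propinvW}), and Theorem~\ref{theoreminv} applies verbatim.

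Next I would substitute these $X,Y,Z$ into the three derived invariants~(\ref{eq:condinv1}), (\ref{eq:condinv2}), (\ref{eq:condinv4}). For~(\ref{eq:condinv1}): since $X_{ij}=0$, the term $-\frac1N\delta^i_j(Y_{[mn]}+Z^\alpha_{\alpha mn})$ must be simplified. Here the key algebraic observation is that $Y_{[mn]}$ only picks up the anti-symmetric part of $L^\alpha_{\underline{m\alpha}|n}$ (which by~(\ref{eq:Wbasicfactored**}) equals $-\tfrac12R_{[mn]}$ up to sign conventions) together with $-\rho_{[mn]}$ and $\widetilde{\mathcal S}{}_{[mn]}=0$, while $Z^\alpha_{\alpha mn}=\mathcal A^\alpha_{\alpha mn}=-\rho_{[mn]}$ by~(\ref{eq:Wbasicfactored******}); combined with the already-established invariance $\overline\rho{}_{[mn]}=\rho_{[mn]}$ of~(\ref{eq:i=jWbasicfactor*}) and $\overline R{}_{[mn]}=R_{[mn]}$, the $\delta^i_j$-bracket collapses so that $\widetilde{\mathcal W}{}^{(1)i}_{jmn}$ reduces identically back to $\widetilde{\mathcal W}{}^i_{jmn}$, and similarly $\widetilde{\mathcal W}{}^{(2)i}_{jmn}$ collapses to $\widetilde{\mathcal W}{}^{(1)i}_{jmn}$ — this explains the stated identifications $\widetilde{\mathcal W}{}^{(1)i}_{jmn}\equiv\widetilde{\mathcal W}{}^i_{jmn}$ and $\widetilde{\mathcal W}{}^{(2)i}_{jmn}\equiv\widetilde{\mathcal W}{}^{(1)i}_{jmn}$. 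For~(\ref{eq:condinv4}), since $X_{ij}=0$ the entire $X$-dependent block vanishes and one is left with $R^i_{jmn}+\frac1{N-1}\delta^i_{[m}R_{\underline{jn}]}+Z^i_{jmn}+\frac1{N-1}\delta^i_{[m}Z^\alpha_{jn]\alpha}$; substituting $Z^i_{jmn}=\mathcal A^i_{jmn}$ and symmetrizing the $\delta^i_{[m}\mathcal A^\alpha_{jn]\alpha}$ term in $j,n$ — legitimate because $R_{\underline{jn}}$ is symmetric in $j,n$ so only the $j\leftrightarrow n$ symmetric part of the $\mathcal A$-trace can survive in a consistent invariant — yields exactly the displayed formula~(\ref{eq:FW4}).

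Finally I would verify invariance of~(\ref{eq:FW4}) directly, rather than only by appeal to Theorem~\ref{theoreminv}: the term $R^i_{jmn}+\frac1{N-1}\delta^i_{[m}R_{\underline{jn}]}$ transforms through~(\ref{eq:Wbasicfactored*}), $\mathcal A^i_{jmn}$ is part of the invariant~(\ref{eq:Wbasicfactoredfinal}), and the extra correction $\frac1{N-1}\delta^i_{[m}\mathcal A^\alpha_{\underline{jn}]\alpha}$ is invariant precisely because of~(\ref{eq:Wrhounutrasnjainv}), i.e. $\overline{\mathcal A}{}^\alpha_{[ij]\alpha}=\mathcal A^\alpha_{[ij]\alpha}$, which forces the symmetric-in-$j,n$ piece to match as well once combined with $\overline\rho{}_{[ij]}=\rho_{[ij]}$. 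The main obstacle I anticipate is purely bookkeeping: tracking the anti-symmetrization brackets $[\,\cdot\,]$ and underlines $\underline{\,\cdot\,}$ consistently through the substitution, in particular making sure that the trace identities~(\ref{eq:Wbasicfactored******}) and~(\ref{eq:Wbasicfactored******x}) ($\mathcal A^\alpha_{\alpha ij}=-\rho_{[ij]}$ and $\mathcal A^\alpha_{[ij]\alpha}=\rho_{[ij]}$) are applied with the right signs, and that the reduction of $\widetilde{\mathcal W}{}^{(1)i}_{jmn}$ and $\widetilde{\mathcal W}{}^{(2)i}_{jmn}$ really does degenerate to previously obtained invariants rather than producing a genuinely new object — essentially checking that the $\delta^i_j$-coefficients cancel identically after using $R_{[mn]}=\overline R{}_{[mn]}$ and $\rho_{[mn]}=\overline\rho{}_{[mn]}$. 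No deep new idea is needed beyond what Theorem~\ref{theoreminv} already supplies.
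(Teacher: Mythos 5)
Your proposal is correct and takes essentially the same route as the paper: the paper likewise reads off $X_{ij}=0$, $Y_{ij}$, $Z^i_{jmn}=\mathcal A^i_{jmn}$ from (\ref{eq:XYZ(Wbasic)}), substitutes them into the derived invariants (\ref{eq:condinv1}, \ref{eq:condinv2}, \ref{eq:condinv4}) of Theorem \ref{theoreminv}, and invokes the invariance $\overline{\mathcal A}{}^\alpha_{[ij]\alpha}=\mathcal A^\alpha_{[ij]\alpha}$ to justify the symmetrized trace in (\ref{eq:FW4}). Your write-up merely makes explicit the trace identities and the collapse of $\widetilde{\mathcal W}{}^{(1)i}_{jmn}$ and $\widetilde{\mathcal W}{}^{(2)i}_{jmn}$ that the paper leaves to the reader.
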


  \begin{cor}
  The geometrical object
    \begin{equation}
      \overset1{\widetilde{\mathcal
      W}}{}^i_{jmn}=R^i_{jmn}-\dfrac1{(N+1)^2}\Big((N+1)
      \big(\delta^i_{[m}L^\alpha_{\underline{j\alpha}|n]}-\delta^i_{[m}\rho_{jn]}\big)+
      \delta^i_{[m}\mathcal
      A^\alpha_{\underline{jn}]\alpha}\Big)+\mathcal A^i_{jmn},
      \label{eq:FW1[1]}
    \end{equation}

    \noindent is an invariant for the mapping $\mathcal F$.

    All associated invariants of the Weyl type for the mapping
    $\mathcal F$ are linear combinations of the invariants
     $\widetilde{\mathcal W}{}^i_{jmn}$, $\widetilde{\mathcal
    W}{}^{(4)i}_{jmn}$, $\overset1{\widetilde{\mathcal
    W}}{}^i_{jmn}$ and the compositions of the invariants of the
    Thomas type for the mapping $\mathcal F$ given by the corresponding of the equations \emph{(\ref{eq:ThomasBasicsim2factoredsim}, \ref{eq:basicthomass1=0})}
    \end{cor}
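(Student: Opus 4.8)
The plan is to derive the three displayed invariants directly from Theorem~\ref{theoreminv} by reading off the data $X_{ij}$, $Y_{ij}$, $Z^i_{jmn}$ from the basic Weyl invariant $\widetilde{\mathcal W}{}^i_{jmn}$ of the equation~(\ref{eq:Wbasicfactoredfinal}), and then to argue the spanning statement separately. First I would rewrite (\ref{eq:Wbasicfactoredfinal}) so that it matches the template (\ref{eq:propinvW}): comparing term by term, the coefficient of $\delta^i_j$ in the antisymmetric slot is zero (so $X_{ij}=0$), the coefficient of $\delta^i_{[m}(\cdot)_{jn]}$ is $-\dfrac1{(N+1)^2}\big[(N+1)(L^\alpha_{\underline{i\alpha}|j}-\rho_{ij})+\widetilde{\mathcal S}{}_{ij}\big]$ (so $Y_{ij}$ is as in (\ref{eq:XYZ(Wbasic)})), and the remaining tensorial part is $Z^i_{jmn}=\mathcal A^i_{jmn}$, which is antisymmetric in $m,n$ by construction in (\ref{eq:WbasicfactoredA}). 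I would note that the identification of the $\delta^i_{[m}(\cdot)_{jn]}$ term as genuinely of the form $\delta^i_{[m}Y_{jn]}$ uses that $L^\alpha_{\underline{i\alpha}|j}$, $\rho_{ij}$ and $\widetilde{\mathcal S}{}_{ij}$ are honest $(0,2)$-tensors, so that $Y_{ij}$ is well-defined.

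Next I would feed these $X,Y,Z$ into (\ref{eq:condinv1}), (\ref{eq:condinv2}), (\ref{eq:condinv4}) of Theorem~\ref{theoreminv}. Because $X_{ij}=0$, the object $W^{(4)i}_{jmn}$ collapses: the two $\delta^i_{[m}X$-terms vanish, and what survives is $R^i_{jmn}+\dfrac1{N-1}\delta^i_{[m}R_{\underline{jn}]}+\mathcal A^i_{jmn}+\dfrac1{N-1}\delta^i_{[m}\mathcal A^\alpha_{jn]\alpha}$; invoking the invariance $\overline{\mathcal A}{}^\alpha_{[ij]\alpha}=\mathcal A^\alpha_{[ij]\alpha}$ from (\ref{eq:Wrhounutrasnjainv}) lets me symmetrize the trace to $\mathcal A^\alpha_{\underline{jn}]\alpha}$, giving exactly (\ref{eq:FW4}); this is $\widetilde{\mathcal W}{}^{(4)i}_{jmn}$. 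For the first two I would check that $W^{(1)i}_{jmn}$ of (\ref{eq:condinv1}), after substituting $Z^\alpha_{\alpha mn}=\mathcal A^\alpha_{\alpha mn}=-\rho_{[mn]}$ (equation (\ref{eq:Wbasicfactored******})) and the given $Y_{ij}$, reproduces $\widetilde{\mathcal W}{}^i_{jmn}$ itself, and that $W^{(2)i}_{jmn}$ of (\ref{eq:condinv2}) reduces to the derived invariant $\overset1{\widetilde{\mathcal W}}{}^i_{jmn}$ of (\ref{eq:FW1[1]}) once one uses $Z^\alpha_{[mn]\alpha}=\mathcal A^\alpha_{[mn]\alpha}=\rho_{[mn]}$ (equation (\ref{eq:Wbasicfactored******x})) together with the first part of Theorem~\ref{theoreminv} to symmetrize/antisymmetrize where needed. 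Thus each of (\ref{eq:condinv1})--(\ref{eq:condinv4}) specializes to one of the claimed invariants, which is all Theorem~\ref{theoreminv} guarantees.

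For the final assertion — that every associated Weyl-type invariant is a linear combination of $\widetilde{\mathcal W}{}^i_{jmn}$, $\widetilde{\mathcal W}{}^{(4)i}_{jmn}$, $\overset1{\widetilde{\mathcal W}}{}^i_{jmn}$ and products of Thomas-type invariants — I would argue as follows. Any candidate $\mathcal W^i_{jmn}$ built from the transformation rule of $R^i_{jmn}$ is, by the first part of Theorem~\ref{theoreminv}, antisymmetric in $m,n$ and of the shape (\ref{eq:propinvW}); the invariance of its $(i,j)$- and $(i,n)$-contractions forces $X_{[mn]}$ and $Y_{mn}$ to be pinned down modulo the already-known invariant combinations $R_{[mn]}$, $L^\alpha_{\underline{j\alpha}|n}$, $\rho_{ij}$, $\widetilde{\mathcal S}{}_{ij}$ and $\mathcal A$, exactly the bookkeeping carried out in the proof of Theorem~\ref{theoreminv} via (\ref{eq:propositionproof1X-X1})--(\ref{eq:propositionproof1Y-Y1}). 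What is left undetermined by those contractions is a genuine $(1,3)$-tensor piece, which is absorbed into $Z^i_{jmn}=\mathcal A^i_{jmn}$ plus a true tensor correction; but a true tensor that is an invariant contributes the trivial (tensorial) part and, subtracting it, one lands in the span of the three listed $\widetilde{\mathcal W}$'s, the scalar/trace invariants $R_{[mn]}$, $\tilde\theta_i$, and products thereof with $\delta$'s — i.e. compositions of the Thomas invariants (\ref{eq:ThomasBasicsim2factoredsim}), (\ref{eq:basicthomass1=0}). I expect the main obstacle to be making this last spanning claim fully rigorous: one must argue that the linear system produced by the two independent contractions has precisely a three-dimensional solution space of ``nontrivial'' Weyl invariants (over the ring of invariant scalars), and that no further independent invariant can arise from higher contractions or from the $(1,3)$-tensor freedom — this is essentially a rank computation on the space of index-symmetry types, and I would present it by exhibiting that (\ref{eq:condinv1}), (\ref{eq:condinv2}), (\ref{eq:condinv4}) already exhaust the degrees of freedom left after (\ref{eq:propositionproof1i=j}), (\ref{eq:propositionproof1i=n}), (\ref{eq:propositionproof1i=nn}).
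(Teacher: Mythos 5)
Your first half follows the paper's route exactly: you read off $X_{ij}=0$, $Y_{ij}$ and $Z^i_{jmn}=\mathcal A^i_{jmn}$ from the basic invariant (\ref{eq:Wbasicfactoredfinal}) written in the template (\ref{eq:propinvW}), feed them into (\ref{eq:condinv1}, \ref{eq:condinv2}, \ref{eq:condinv4}), and use the trace identities $\mathcal A^\alpha_{\alpha mn}=-\rho_{[mn]}$, $\mathcal A^\alpha_{[mn]\alpha}=\rho_{[mn]}$ together with the invariance of $\mathcal A^\alpha_{[ij]\alpha}$ to simplify. That is precisely the content of the paper's equation (\ref{eq:XYZ(FW1)}) and the derivation of (\ref{eq:FW1[1]}) and (\ref{eq:FW4}), so the first assertion of the corollary is proved the same way.

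The gap is in the spanning claim, and you have in fact located it yourself. The paper does not attempt the rank computation you sketch; instead it proves closure of the iteration by a finite explicit check: it reads off the data $X,Y,Z$ of the two \emph{new} invariants --- equation (\ref{eq:XYZ(FW4)}) for $\widetilde{\mathcal W}{}^{(4)i}_{jmn}$ and equation (\ref{eq:XYZ(FW1[1])}) for $\overset1{\widetilde{\mathcal W}}{}^i_{jmn}$ --- substitutes them back into (\ref{eq:condinv1}, \ref{eq:condinv2}, \ref{eq:condinv4}), and verifies that only $\widetilde{\mathcal W}{}^{(4)i}_{jmn}$ and $\overset1{\widetilde{\mathcal W}}{}^i_{jmn}$ reappear. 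Since the recursion of Theorem \ref{theoreminv} stabilizes after one step, every invariant producible by this mechanism lies in the span of the three listed objects (up to the trace/Thomas-type invariants that get absorbed along the way). Your proposal replaces this concrete verification with an appeal to ``a rank computation on the space of index-symmetry types'' which you acknowledge you have not carried out; as written, the second assertion of the corollary is therefore not established. To repair it in the spirit of the paper you do not need the dimension count at all --- you need to perform the two additional substitutions (\ref{eq:XYZ(FW4)}) and (\ref{eq:XYZ(FW1[1])}) and observe that no new invariant arises. (One can fairly object that even the paper's argument only bounds the invariants obtainable \emph{via Theorem \ref{theoreminv}}, not all conceivable Weyl-type invariants; but that caveat applies equally to both arguments, and the closure check is the step your proposal omits.)
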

    \begin{proof}
      With respect to the equations (\ref{eq:propinvW},
      \ref{eq:Wbasicfactoredfinal}), we get

      \begin{equation}
        \begin{array}{ccc}
          X_{ij}=0&Y_{ij}=-\dfrac1{N+1}L^\alpha_{\underline{i\alpha}|j}+
          \dfrac1{N+1}\rho_{ij}-\dfrac1{(N+1)^2}\widetilde{\mathcal
          S}{}_{ij},&Z^i_{jmn}=\mathcal A^i_{jmn}.
        \end{array}\label{eq:XYZ(FW1)}
      \end{equation}

      If substitute the geometrical objects $X_{ij}$, $Y_{ij}$,
      $Z^i_{jmn}$ given by the equation (\ref{eq:XYZ(FW1)}) into the
      equations (\ref{eq:condinv1}, \ref{eq:condinv2},
      \ref{eq:condinv4}), we will obtain the invariants
      $\overset1{\widetilde{\mathcal W}}{}^i_{jmn}$ and
      $\widetilde{\mathcal W}{}^{(4)i}_{jmn}$ given by the equations
      (\ref{eq:FW1[1]}) and (\ref{eq:FW4}) respectively.

      Based on the equations (\ref{eq:propinvW},
      \ref{eq:FW4}), we read

      \begin{equation}
        \begin{array}{ccc}
          X_{ij}=0&Y_{ij}=\dfrac1{N-1}R_{\underline{ij}}+\dfrac1{N-1}
          \mathcal A^\alpha_{\underline{ij}\alpha},&Z^i_{jmn}=\mathcal A^i_{jmn}.
        \end{array}\label{eq:XYZ(FW4)}
      \end{equation}

      From these $X_{ij}$, $Y_{ij}$, $Z^i_{jmn}$ involved into the
      equations (\ref{eq:condinv1}, \ref{eq:condinv2},
      \ref{eq:condinv4}), we get the invariant
      $\widetilde{W}{}^{(4)i}_{jmn}$ for the mapping $\mathcal F$
      given by the equation (\ref{eq:FW4}).

      From the invariant $\overset1{\widetilde{\mathcal
      W}}{}^i_{jmn}$, one gets

      \begin{equation}
        \begin{array}{ccc}
          X_{ij}=0&Y_{ij}=-\dfrac1{N+1}L^\alpha_{\underline{i\alpha}|j}+
          \dfrac1{N+1}\rho_{ij}-\dfrac1{(N+1)^2}\mathcal A^\alpha_{\underline{ij}\alpha},&Z^i_{jmn}=\mathcal A^i_{jmn}.
        \end{array}\label{eq:XYZ(FW1[1])}
      \end{equation}

      After substituting the expressions (\ref{eq:XYZ(FW1[1])}) of
      the geometrical objects $X_{ij}$, $Y_{ij}$, $Z^i_{jmn}$ into
      the equations (\ref{eq:condinv1}, \ref{eq:condinv2},
      \ref{eq:condinv4}), one obtains the invariants
      $\overset1{\widetilde{\mathcal W}}{}^i_{jmn}$ and
      $\widetilde{\mathcal W}{}^{(4)i}_{jmn}$.

      In this way, we completed the recursion for obtaining
      invariants of the Weyl type with respect to the Theorem \ref{theoreminv}. For this reason, any other
      associated invariant of the Weyl type is the
      linear combination of the previously obtained invariants $\widetilde{\mathcal
      W}{}^{(1)i}_{jmn}$, $\widetilde{\mathcal W}{}^{(4)i}_{jmn}$,
      $\overset1{\widetilde{\mathcal W}}{}^i_{jmn}$ and the
      corresponding compositions of the invariants of the Thomas
      type given by the equations
      (\ref{eq:ThomasBasicsim2factoredsim},
      \ref{eq:basicthomass1=0}).
    \end{proof}

    \pagebreak

    \begin{cor}
    If the geometrical object $\mathcal A{}^i_{jmn}$ is

     \begin{equation}
     \mathcal A{}^i_{jmn}=\delta^i_j\mathcal
    P{}_{[mn]}+\delta^i_{[m}\mathcal Q{}_{jn]}+\mathcal
    N^i_{jmn},\label{eq:A(P,Q)}
    \end{equation}

    \noindent then the invariants $\widetilde{\mathcal
    W}{}^{i}_{jmn}$, $\widetilde{\mathcal W}{}^{(4)i}_{jmn}$,
    $\overset1{\mathcal{\widetilde W}}{}^i_{jmn}$ transform to

    \begin{align}
      &\aligned
      \widetilde{\mathcal
      W}{}^{i}_{jmn}&=R^i_{jmn}-\dfrac1{N+1}\big(\delta^i_{[m}L^\alpha_{\underline{j\alpha}|n]}-
      \delta^i_{[m}\rho_{jn]}\big)-\dfrac1{(N+1)^2}\delta^i_{[m}\widetilde{\mathcal
      S}{}_{jn]}\\&+\delta^i_j\mathcal
      P_{[mn]}+\delta^i_{[m}\mathcal Q_{jn]}+\mathcal N^i_{jmn},
      \endaligned\label{eq:FW1(P,Q)}\\\displaybreak[0]
      &\aligned
      \widetilde{\mathcal
      W}{}^{(4)i}_{jmn}&=R^i_{jmn}+\dfrac1{N-1}\delta^i_{[m}R_{\underline{jn}]}+
      \delta^i_j\mathcal P_{[mn]}+\mathcal
      N^i_{jmn}+\dfrac1{N-1}\delta^i_{[m}\mathcal
      N^\alpha_{\underline{jn}]\alpha},
      \endaligned\label{eq:FW4(P,Q)}\\\displaybreak[0]
      &\aligned
      \overset1{\widetilde{\mathcal W}}{}^i_{jmn}&=R^i_{jmn}-
      \dfrac1{N+1}\big(\delta^i_{[m}L^\alpha_{\underline{j\alpha}|n]}-
      \delta^i_{[m}\rho_{jn]}\big)+\delta^i_j\mathcal
      P_{[mn]}+\delta^i_{[m}\mathcal Q_{jn]}+\mathcal N^i_{jmn}\\&+
      \dfrac1{(N+1)^2}\delta^i_{[m}\mathcal
      Q_{\underline{jn}]}-\dfrac1{(N+1)^2(N-1)}\delta^i_{[m}\mathcal
      N^\alpha_{\underline{jn}]\alpha}.
      \endaligned\label{eq:FW1[1](P,Q)}
    \end{align}
  \end{cor}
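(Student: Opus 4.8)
The claim is a purely algebraic substitution: we take the three master invariants $\widetilde{\mathcal W}{}^i_{jmn}$, $\widetilde{\mathcal W}{}^{(4)i}_{jmn}$, $\overset1{\widetilde{\mathcal W}}{}^i_{jmn}$ given by \emph{(\ref{eq:Wbasicfactoredfinal}, \ref{eq:FW4}, \ref{eq:FW1[1]})}, and plug in the hypothesis \emph{(\ref{eq:A(P,Q)})} for $\mathcal A^i_{jmn}$; the asserted formulas \emph{(\ref{eq:FW1(P,Q)}, \ref{eq:FW4(P,Q)}, \ref{eq:FW1[1](P,Q)})} should drop out after contracting and anti-symmetrizing. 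So the plan is to handle each of the three invariants in turn, tracking only how the decomposition $\mathcal A^i_{jmn}=\delta^i_j\mathcal P_{[mn]}+\delta^i_{[m}\mathcal Q_{jn]}+\mathcal N^i_{jmn}$ propagates through the two operations that appear in those formulas: the full trace $\mathcal A^\alpha_{\alpha ij}$ and the partial trace $\mathcal A^\alpha_{\underline{ij}\alpha}$ (after the indicated symmetrization/anti-symmetrization in $m,n$).

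\textbf{Key steps.} First I compute the two relevant traces of the model tensor \emph{(\ref{eq:A(P,Q)})}. Contracting on $i,j$: $\mathcal A^\alpha_{\alpha mn}=N\mathcal P_{[mn]}+\frac12\delta^\alpha_{[m}\mathcal Q_{\alpha n]}+\mathcal N^\alpha_{\alpha mn}$, and since $\delta^\alpha_{[m}\mathcal Q_{\alpha n]}=\mathcal Q_{[nm]}-\mathcal Q_{[mn]}$-type terms collapse to $\mathcal Q_{mn}-\mathcal Q_{nm}=\mathcal Q_{[mn]}$ up to the fixed convention, one gets a clean combination of $\mathcal P_{[mn]}$, $\mathcal Q_{[mn]}$, $\mathcal N^\alpha_{\alpha mn}$; but by \emph{(\ref{eq:Wbasicfactored******})} we already know $\mathcal A^\alpha_{\alpha ij}=-\rho_{[ij]}$, so in fact $\widetilde{\mathcal W}{}^i_{jmn}$ carries no $\delta^i_j$ term originally and the new $\delta^i_j\mathcal P_{[mn]}$ simply rides along — giving \emph{(\ref{eq:FW1(P,Q)})} by direct substitution of \emph{(\ref{eq:A(P,Q)})} into \emph{(\ref{eq:Wbasicfactoredfinal})}. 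Second, for the partial trace I contract \emph{(\ref{eq:A(P,Q)})} on the indices $i$ and the last slot: $\mathcal A^\alpha_{jmn}$ traced over $\alpha$ with the fourth index $n$ produces $\delta^\alpha_j$-trace $=\mathcal P_{[mn]}$-free parts plus $\frac1{?}\mathcal Q_{jm}$ and $\mathcal N^\alpha_{jm\alpha}$; after the symmetrization $\underline{jn}$ demanded in \emph{(\ref{eq:FW4})} the $\mathcal P$ and the anti-symmetric pieces of $\mathcal Q$ vanish, leaving $\delta^i_{[m}\mathcal A^\alpha_{\underline{jn}]\alpha}=\delta^i_{[m}\mathcal N^\alpha_{\underline{jn}]\alpha}$ plus a $\mathcal Q$-contribution — which for \emph{(\ref{eq:FW4(P,Q)})} is absorbed, and for \emph{(\ref{eq:FW1[1](P,Q)})} survives as the explicit $\frac1{(N+1)^2}\delta^i_{[m}\mathcal Q_{\underline{jn}]}$ and $-\frac1{(N+1)^2(N-1)}\delta^i_{[m}\mathcal N^\alpha_{\underline{jn}]\alpha}$ terms. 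Third, substitute these reduced traces back into \emph{(\ref{eq:FW4})} and \emph{(\ref{eq:FW1[1]})} and collect.

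\textbf{Main obstacle.} The only real bookkeeping hazard is the anti-symmetrization bracket convention: the paper uses $\delta^i_{[m}Y_{jn]}$ to abbreviate $\delta^i_mY_{jn}-\delta^i_nY_{jm}$ and $\underline{jn}$ for the symmetric part, and one must be careful that contracting $\delta^i_{[m}\mathcal Q_{jn]}$ over $i=n$ gives a term proportional to $\mathcal Q_{jm}$ with a coefficient that depends on $N$ only through the Kronecker trace $\delta^\alpha_\alpha=N$ — not, as one might carelessly write, $N-1$. I would verify this small constant once on a scalar model ($\mathcal Q_{ij}$ arbitrary, $\mathcal P=\mathcal N=0$) and then reuse it. With that pinned down, the three displayed identities follow by linearity from the already-proved formulas \emph{(\ref{eq:Wbasicfactoredfinal}, \ref{eq:FW4}, \ref{eq:FW1[1]})} together with the invariances \emph{(\ref{eq:i=jWbasicfactor*}, \ref{eq:Wrhounutrasnjainv})}, since $\mathcal P_{[mn]}$, $\mathcal Q_{ij}$, $\mathcal N^i_{jmn}$ inherit invariance from that of $\mathcal A^i_{jmn}$ exactly as $X,Y,Z$ did in Theorem \ref{theoreminv}.
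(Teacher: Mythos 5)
Your route is the same as the paper's: formula (\ref{eq:FW1(P,Q)}) is literal substitution of (\ref{eq:A(P,Q)}) into (\ref{eq:Wbasicfactoredfinal}), and the other two formulas follow by contracting (\ref{eq:A(P,Q)}) over $i$ and the last index, symmetrizing the remaining pair, and feeding the resulting trace into (\ref{eq:FW4}) and (\ref{eq:FW1[1]}). That is exactly what the paper does through its auxiliary identity (\ref{eq:A(P,Q)i=nsim}).

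However, the one explicit constant you commit to is stated backwards, and it is precisely the constant the corollary turns on. With the paper's bracket convention $\delta^i_{[m}\mathcal Q_{jn]}=\delta^i_m\mathcal Q_{jn}-\delta^i_n\mathcal Q_{jm}$, contraction over $i=n$ gives $\delta^n_m\mathcal Q_{jn}-\delta^n_n\mathcal Q_{jm}=(1-N)\mathcal Q_{jm}$: both the Kronecker trace $\delta^n_n=N$ and the surviving term $\delta^n_m\mathcal Q_{jn}=\mathcal Q_{jm}$ contribute, so the coefficient is $-(N-1)$, not $-N$. Your parenthetical claims the coefficient ``depends on $N$ only through the Kronecker trace \dots not, as one might carelessly write, $N-1$,'' which is the careless version. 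After symmetrization one gets the paper's key equation $\mathcal A^\alpha_{\underline{ij}\alpha}=-(N-1)\mathcal Q_{\underline{ij}}+\mathcal N^\alpha_{\underline{ij}\alpha}$, i.e. (\ref{eq:A(P,Q)i=nsim}); the factor $-(N-1)$ is exactly what makes $\tfrac1{N-1}\delta^i_{[m}\mathcal A^\alpha_{\underline{jn}]\alpha}$ contribute $-\delta^i_{[m}\mathcal Q_{\underline{jn}]}$, cancelling the explicit $\delta^i_{[m}\mathcal Q_{jn]}$ so that no $\mathcal Q$ survives in (\ref{eq:FW4(P,Q)}), and what produces the weights $\tfrac1{(N+1)^2}$ and $-\tfrac1{(N+1)^2(N-1)}$ in (\ref{eq:FW1[1](P,Q)}). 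With your asserted value the $\mathcal Q$-terms do not cancel and the coefficients of $\mathcal N^\alpha_{\underline{jn}]\alpha}$ come out wrong in both formulas. Since you do propose to re-verify the constant on a toy model, the repair is mechanical, but as written this step fails.
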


  \begin{proof}
  If contracts the equality (\ref{eq:A(P,Q)}) by the indices $i$ and
  $n$, one obtains

  \begin{equation}
     \mathcal A{}^\alpha_{\underline{ij}\alpha}=-(N-1)\mathcal Q{}_{\underline{ij}}+\mathcal
    N^\alpha_{\underline{ij}\alpha},\label{eq:A(P,Q)i=nsim}
    \end{equation}

    After substituting the equations (\ref{eq:A(P,Q)},
    \ref{eq:A(P,Q)i=nsim}) into the equations (\ref{eq:Wbasicfactoredfinal},
    \ref{eq:FW4}, \ref{eq:FW1[1]}), one completes the proof for this
    corollary.
  \end{proof}

  \section{Invariants for almost geodesic mappings of third
  type}\label{associatedderivedinvariantssec4}

  In attempt to generalize the concept of geodesics, N. S. Sinyukov
  started the research about almost geodesic mappings
  \cite{sinjukov}. J. Mike\v s and his research group
  have continued the study about almost geodesic mappings of
  symmetric affine connection spaces \cite{mik10,mik5}.

  M. S. Stankovi\'c \cite{ag1,ag2,ag3} generalized the theory
  of almost geodesic mappings of symmetric affine connection spaces.
  Many authors have continued this research
  \cite{mica2,mica1,mica3,ddis3,ddis4} and many others.

  There are three kinds and two types of almost geodesic mappings
  $\mathcal F:\mathbb{GA}_N\to\mathbb{G\overline A}{}_N$.

  We are interested to obtain the invariants for equitorsion almost geodesic
  mappings $\mathcal F:\mathbb{GA}_N\to\mathbb{G\overline A}{}_N$ of
  both first and the second kind.

  The basic equations of the equitorsion almost geodesic mapping
  $\mathcal F:\mathbb{GA}_N\to\mathbb{G\overline A}_N$ of a $p$-th
  kind, $p=1,2$, are

  \begin{equation}
    \left\{\begin{array}{l}
      \overline L{}^i_{\underline{jk}}=L^i_{\underline{jk}}+
      \delta^i_j\psi_k+\delta^i_k\psi_j+\sigma_{jk}\varphi^i,\\
      \varphi^i_{\underset p|j}=\nu_j\varphi^i+\mu\delta^i_j,
    \end{array}\right.\label{eq:pi3pbasicequations}
  \end{equation}

  \noindent for $\varphi^i_{\underset1|j}=\varphi^i_{,j}+L^i_{\alpha
  j}\varphi^\alpha\equiv\varphi^i_{|j}+L^i_{\underset\vee{\alpha j}}\varphi^\alpha$ and
  $\varphi^i_{\underset2|j}=\varphi^i_{,j}+L^i_{j\alpha}\equiv
  \varphi^i_{|j}-L^i_{\underset\vee{\alpha j}}\varphi^\alpha$.

  To complete this research, we need the next equalities

  \begin{align}
    &s_1=1,s_2=0,s_3=1,\phi^i_{jk}=-\dfrac12\sigma_{jk}\varphi^i,\label{eq:pi3s1s2s3phi}\\\displaybreak[0]
    &\rho_{ij}=-\dfrac12\sigma_{i\alpha|j}\varphi^\alpha-\dfrac12\sigma_{i\alpha}\varphi^\alpha_{|j}
    =-\dfrac12\sigma_{i\alpha|j}\varphi^\alpha-
    \dfrac12\sigma_{i\alpha}\nu_j\varphi^\alpha-
    \dfrac12\mu\sigma_{ij}-\dfrac{(-1)^p}2\sigma_{i\alpha}L^\alpha_{\underset\vee{\beta j}}\varphi^\beta
    ,\label{eq:pi3rhoij}\\\displaybreak[0]
    &\aligned\widetilde{\mathcal S}{}_{ij}&=
    -\dfrac{N+1}2\sigma_{ij}\varphi^\alpha\Big[L^\beta_{\underline{\alpha\beta}}+\dfrac12\sigma_{\alpha\beta}\varphi^\beta\Big]+
    \Big[L^\alpha_{\underline{i\alpha}}+\dfrac12\sigma_{i\alpha}\varphi^\alpha\Big]
    \Big[L^\beta_{\underline{j\beta}}+\dfrac12\sigma_{j\beta}\varphi^\beta\Big],
    \endaligned\label{eq:pi3Sij}\\\displaybreak[0]
    &\aligned
    \mathcal
    A^i_{jmn}&=\dfrac12\big(\sigma_{jm|n}\varphi^i+\sigma_{jm}\varphi^i_{|n}\big)+
    \dfrac14\big(\sigma_{jm}\sigma_{\alpha
    n}-\sigma_{jn}\sigma_{\alpha
    m}\big)\varphi^\alpha\varphi^i\\&
    =-\dfrac14\mu\delta^i_{[m}\sigma_{jn]}+\dfrac14\big(\sigma_{jm|n}-\sigma_{jn|m}\big)\varphi^i+
    \dfrac14\big(\sigma_{jm}\sigma_{\alpha
    n}-\sigma_{jn}\sigma_{\alpha
    m}\big)\varphi^\alpha\varphi^i
    \\&+\dfrac14\Big[\sigma_{jm}\big(\nu_n\varphi^i+(-1)^pL^i_{\underset\vee{\alpha
    n}}\varphi^\alpha\big)-\sigma_{jn}\big(\nu_m\varphi^i+(-1)^pL^i_{\underset\vee{\alpha
    m}}\varphi^\alpha\big)\Big].
    \endaligned\label{eq:pi3Aijmn}
  \end{align}

  With respect to the equations (\ref{eq:A(P,Q)},
  \ref{eq:pi3Aijmn}), one gets

  \begin{equation}
    \begin{array}{c}
    \begin{array}{cc}
      \mathcal P_{ij}=0,&\mathcal Q_{ij}=-\dfrac14\mu\sigma_{ij},
      \end{array}\\
      {\aligned
      \mathcal N^i_{jmn}&=\dfrac14\big(\sigma_{jm|n}-\sigma_{jn|m}\big)\varphi^i+
    \dfrac14\big(\sigma_{jm}\sigma_{\alpha
    n}-\sigma_{jn}\sigma_{\alpha
    m}\big)\varphi^\alpha\varphi^i
    \\&+\dfrac14\Big[\sigma_{jm}\big(\nu_n\varphi^i+(-1)^pL^i_{\underset\vee{\alpha
    n}}\varphi^\alpha\big)-\sigma_{jn}\big(\nu_m\varphi^i+(-1)^pL^i_{\underset\vee{\alpha
    m}}\varphi^\alpha\big)\Big].
      \endaligned}
    \end{array}\label{eq:pi3P,Q,N}
  \end{equation}

  The basic invariant for the almost geodesic mapping $\mathcal F$ is

  \begin{footnotesize}
  \begin{equation}
    \aligned
    \widetilde{\mathcal W}{}^i_{jmn}&=R^i_{jmn}-\dfrac{N+3}{4(N+1)}\delta^i_{[m}\mu\sigma_{jn]}+\dfrac14\big(\sigma_{jm|n}-\sigma_{jn|m}\big)\varphi^i+
    \dfrac14\big(\sigma_{jm}\sigma_{\alpha
    n}-\sigma_{jn}\sigma_{\alpha
    m}\big)\varphi^\alpha\varphi^i
    \\&+\dfrac14\Big[\sigma_{jm}\big(\nu_n\varphi^i+(-1)^pL^i_{\underset\vee{\alpha
    n}}\varphi^\alpha\big)-\sigma_{jn}\big(\nu_m\varphi^i+(-1)^pL^i_{\underset\vee{\alpha
    m}}\varphi^\alpha\big)\Big]\\&-
    \dfrac1{N+1}\Big[\delta^i_{[m}L^\alpha_{\underline{j\alpha}|n]}+
    \dfrac12\big(\delta^i_{[m}\sigma_{j\alpha|n]}+\delta^i_{[m}\sigma_{j\alpha}\nu_{n]}\big)\varphi^\alpha+\dfrac{(-1)^p}2\delta^i_{[m}\sigma_{j\alpha}
    L^\alpha_{\underset\vee{\beta
    n}]}\varphi^\beta\Big]\\&+\dfrac1{2(N+1)}\delta^i_{[m}\sigma_{jn]}\varphi^\alpha
    \Big[L^\beta_{\underline{\alpha\beta}}+\dfrac12\sigma_{\alpha\beta}\varphi^\beta\Big]-
    \dfrac1{(N+1)^2}\Big[L^\alpha_{\underline{j\alpha}}+\dfrac12\sigma_{j\alpha}\varphi^\alpha\Big]
    \Big[\delta^i_{[m}L^\beta_{\underline{n]\beta}}+\dfrac12\delta^i_{[m}\sigma_{n]\beta}\varphi^\beta\Big].
    \endaligned\label{eq:pi3basic}
  \end{equation}
  \end{footnotesize}

  The the fourth associated derived invariant for the mapping
  $\mathcal F$ is

  \begin{footnotesize}
    \begin{align}
      &\aligned
      \widetilde{\mathcal
      W}{}^{(4)i}_{jmn}&=R^i_{jmn}+\dfrac1{N-1}\delta^i_{[m}R_{jn]}+
      \dfrac14\big(\sigma_{jm|n}-\sigma_{jn|m}\big)\varphi^i+
    \dfrac14\big(\sigma_{jm}\sigma_{\alpha
    n}-\sigma_{jn}\sigma_{\alpha
    m}\big)\varphi^\alpha\varphi^i
    \\&+\dfrac14\Big[\sigma_{jm}\big(\nu_n\varphi^i+(-1)^pL^i_{\underset\vee{\alpha
    n}}\varphi^\alpha\big)-\sigma_{jn}\big(\nu_m\varphi^i+(-1)^pL^i_{\underset\vee{\alpha
    m}}\varphi^\alpha\big)\Big]
    \\&+\dfrac1{4(N-1)}\big(\delta^i_{[m}\sigma_{jn]|\alpha}-\delta^i_{[m}\sigma_{j\alpha|n]}\big)\varphi^\alpha+
    \dfrac1{4(N-1)}\delta^i_{[m}\sigma_{jn]}\big(\sigma_{\alpha\beta}+\nu_\alpha\varphi^\alpha+
    (-1)^pL^\beta_{\underset\vee{\alpha\beta}}\varphi^\alpha\big)\\&-
    \dfrac1{4(N-1)}\big(\delta^i_{[m}\sigma_{j\alpha}\sigma_{n]\beta}\varphi^\alpha\varphi^\beta+
    \delta^i_{[m}\sigma_{j\alpha}\nu_{n]}\varphi^\alpha+(-1)^p\delta^i_{[m}\sigma_{j\beta}L^\beta_{\underset\vee{\alpha
    n}]}\varphi^\alpha\big).
      \endaligned\label{eq:pi3condinv4}
    \end{align}
  \end{footnotesize}

  The invariant $\overset1{\widetilde{\mathcal W}}{}^i_{jmn}$ for
  the almost geodesic mapping $\mathcal F$ is

  \begin{footnotesize}
    \begin{equation}
      \aligned
      \overset1{\widetilde{\mathcal
      W}}{}^i_{jmn}&=R^i_{jmn}-\dfrac1{2(N+1)}\Big[2\delta^i_{[m}L^\alpha_{\underline{j\alpha}|n]}+\big(\delta^i_{[m}
      \sigma_{j\alpha|n]}+\delta^i_{[m}\sigma_{j\alpha}\nu_{n]}\big)\varphi^\alpha+
      (-1)^p\delta^i_{[m}\sigma_{j\alpha}L^\alpha_{\underset\vee{\beta
      n}]}\varphi^\beta\Big]\\&-
      \dfrac{(N+2)^2}{4(N+1)^2}\delta^i_{[m}\mu\sigma_{jn]}+\dfrac14\big(\sigma_{jm|n}-\sigma_{jn|m}\big)\varphi^i+
    \dfrac14\big(\sigma_{jm}\sigma_{\alpha
    n}-\sigma_{jn}\sigma_{\alpha
    m}\big)\varphi^\alpha\varphi^i
    \\&+\dfrac14\Big[\sigma_{jm}\big(\nu_n\varphi^i+(-1)^pL^i_{\underset\vee{\alpha
    n}}\varphi^\alpha\big)-\sigma_{jn}\big(\nu_m\varphi^i+(-1)^pL^i_{\underset\vee{\alpha
    m}}\varphi^\alpha\big)\Big]\\&-\dfrac1{4(N+1)^2(N-1)}\Big[
    \big(\delta^i_{[m}\sigma_{jn]|\alpha}-\delta^i_{[m}\sigma_{j\alpha|n]}\big)\varphi^\alpha+
    \big(\delta^i_{[m}\sigma_{jn]}\sigma_{\alpha\beta}-\delta^i_{[m}\sigma_{j\beta}\sigma_{n]\alpha}\big)\varphi^\alpha\varphi^\beta\Big]\\&
    -\dfrac1{4(N+1)^2(N-1)}\Big[\delta^i_{[m}\sigma_{jn]}\big(\nu_\alpha\varphi^\alpha+(-1)^pL^\beta_{\underset\vee{\alpha\beta}}\varphi^\alpha\big)
    -\sigma_{j\beta}\big(\delta^i_{[m}\nu_{n]}\varphi^\beta+(-1)^p\delta^i_{[m}L^\beta_{\underset\vee{\alpha
    n}]}\varphi^\alpha\big)\Big].
      \endaligned
    \end{equation}
  \end{footnotesize}

  \section{Conclusion}

  We developed the
  methodology for obtaining associated invariants for mappings
  between non-symmetric affine connection spaces in this paper.

  In the Section \ref{associatedderivedinvariantssec2}, it was continued
  the research from  about derived invariants for
  geometrical mappings \big(see \cite{jageninv}\big). We obtained three invariants
  $W^{(1)i}_{jmn}$, $W^{(2)i}_{jmn}$, $W^{(4)i}_{jmn}$ from the
  invariant $W^i_{jmn}$ \Big(see the Theorem \ref{theoreminv}, Eqs. (\ref{eq:propinvW},
  \ref{eq:condinv1}, \ref{eq:condinv2}, \ref{eq:condinv4}\big)\Big). In
  this section, we founded the auxiliary invariant for any mapping
  which will simplify some of the invariants obtained until now.

  In the Section \ref{associatedderivedinvariantssec3}, we obtained
  the associated basic and associated derived invariants for a mapping
  $\mathcal F:\mathbb{GA}_N\to\mathbb{G\overline A}{}_N$.

  In the Section \ref{associatedderivedinvariantssec4}, it were applied
  the results obtained in the Section
  \ref{associatedderivedinvariantssec3} for finding invariants for
  almost geodesic mappings of the third type.

  In the future researches, we will discuss about the space of invariants
  with respect to the results obtained in the Section
  \ref{associatedderivedinvariantssec2}. The
  results obtained in this paper will be generalized with respect to the transformation
  rules of torsion tensors under different mappings.

  \section*{Acknowledgements}

  This paper is financially supported by Serbian Ministry of
  Education, Science and Technological Development, Grant. No.
  174012.

  \end{document}